\def\cl{\centerline}
\def\vs{\vspace*}
\def\Z{\mathbb{Z}}
\def\C{\mathbb{C}}
\numberwithin{equation}{section}
\newtheorem{theo}{Theorem}[section]
\newtheorem{defi}[theo]{Definition}
\newtheorem{lemm}[theo]{Lemma}
\newtheorem{prop}[theo]{Proposition}
\begin{document}
\begin{center}
{\bf\large Loop Schr\"{o}dinger-Virasoro Lie conformal algebra\ $^*$}
\footnote {$^*$ Supported by NSF grant no. 11371278, 11431010, 11301130, 11501417,
the Fundamental Research Funds for the Central Universities of China, Innovation Program of Shanghai Municipal Education Commission and  Program for Young Excellent Talents in Tongji University.

Corresponding author: J. Han (jzhan@tongji.edu.cn).
}
\end{center}

\cl{Haibo Chen$^1$,  Jianzhi
Han$^{1,\dag},$  Yucai Su$^1$, Ying Xu$^2$
}

\cl{\small $^{1}$Department of Mathematics, Tongji University, Shanghai 200092, China}
\cl{\small $^{2}$Department of Mathematics, Hefei University of Technology, Hefei 230009, Anhui, China}
\vs{8pt}

{\small\footnotesize
\parskip .005 truein
\baselineskip 3pt \lineskip 3pt
\noindent{{\bf Abstract:} In this paper, we  introduce two kinds of Lie conformal algebras associated with the loop Schr\"{o}dinger-Virasoro Lie algebra and
the extended loop  Schr\"{o}dinger-Virasoro Lie algebra, respectively. The conformal derivations,  the second cohomology groups of these two conformal algebras are completely determined.  And nontrivial free  conformal modules of rank one and $\Z$-graded free intermediate series modules over these two conformal algebras are also classified in the present paper.
  \vs{5pt}

\noindent{\bf Key words:} Lie conformal algebra, conformal derivation, second cohomology group, conformal module
\parskip .001 truein\baselineskip 6pt \lineskip 6pt

\noindent{\it Mathematics Subject Classification (2010):} 17B05, 17B10, 17B40, 17B65, 17B68.}}
\parskip .001 truein\baselineskip 6pt \lineskip 6pt

\section{Introduction}
Lie conformal algebras encode an axiomatic description of the operator product expansion
of chiral fields in conformal field theory, whose notion was   introduced in \cite{K1,K3}.
They turned out to be an adequate tool for the realization of the program of the study of Lie
(super)algebras and associative algebras (and their representations), satisfying the sole locality property.
 Lie conformal algebras  play important roles in quantum field theory, vertex algebras and integrable systems.
 In addition, Lie conformal algebras have
close connections to Hamiltonian formalism in the theory of nonlinear evolution.
The structure theory and representation theory of Lie conformal algebras were intensively studied
(see, e.g., \cite{DK,BKV,FSW,WCY,CK,S,SY1,{SK},SY2,GXY}).
  In this paper, we investigate derivation algebras, second cohomology groups and some free modules over  conformal algebras associated with  loop algebras of  the Schr\"{o}dinger-Virasoro Lie algebra and its extended algebra.

The Schr\"{o}dinger-Virasoro Lie algebra was introduced in the context of nonequilibrium statistical
physics during the process of investigating the free Schr\"{o}dinger equations in \cite{M1}.
There are two sectors
of this type Lie algebras, i.e., the original one and the twisted one, both of which are closely
related to the Schr\"{o}dinger algebra and the Virasoro algebra, which play important roles in many
areas of mathematics and physics and have been investigated in a series
of papers (see, e.g., \cite{HLS,CFHS,LS1,ZT}). The (extended) loop Schr\"{o}dinger-Virasoro algebra is the Lie algebra of the tensor product of the  (extended) Schr\"{o}dinger-Virasoro algebra and the Laurent polynomial
algebra.
What we  consider in this paper are the loop Schr\"{o}dinger-Virasoro Lie conformal algebra and the extended loop Schr\"{o}dinger-Virasoro Lie conformal algebra. The loop Schr\"{o}dinger-Virasoro algebra $\mathfrak{sv}$ is a Lie algebra with basis $\{L_{m,i},M_{m,i},Y_{p,i}\mid m,i\in\Z,p\in\frac{1}{2}+\Z\}$ subject to the following nontrivial Lie brackets:
 \begin{equation}\label{de1.1}
\aligned
&[L_{m,i},L_{n,j}]= (m-n)L_{m+n,i+j},\ [L_{m,i},M_{n,j}]= -n M_{m+n,i+j},\\
&[L_{m,i},Y_{p,j}]= (\frac{m}{2}-p)
Y_{m+p,i+j},\ [Y_{p,i},Y_{q,j}]=
(p-q)M_{p+q,i+j}
\endaligned
\end{equation}
for any $m,n,i,j\in \Z$ and $p,q\in\frac{1}{2}+\Z$.
The  extended loop Schr\"{o}dinger-Virasoro $\tilde{\mathfrak{sv}}$ is a Lie algebra with basis
$\{L_{m,i},M_{m,i},  Y_{p,i}, N_{m,i}\mid m, i\in\Z, p\in\frac{1}{2}+\Z\}$ satisfying the nontrivial
Lie brackets defined in \eqref{de1.1} and in addition:
\begin{equation}\label{de1.2}
[L_{m,i},N_{n,j}]= -nN_{m+n,i+j},\ [N_{m,i},Y_{p,j}]=
Y_{m+p,i+j},\
[N_{m,i},M_{n,j}]=2M_{m+n,i+j}\
\end{equation}
for any $m,n,i,j\in \Z$ and $p\in\frac{1}{2}+\Z$.

 In this paper,
we   associate  the loop Schr\"{o}dinger-Virasoro Lie algebra and the extended loop Schr\"{o}dinger-Virasoro Lie algebra with two Lie conformal algebras, called the loop Schr\"{o}dinger-Virasoro Lie conformal algebra and the extended loop Schr\"{o}dinger-Virasoro Lie conformal algebra, respectively.
The  {\it loop Schr\"{o}dinger-Virasoro Lie conformal algebra} $\mathfrak{csv}$ has a  $\C[\partial]$-basis $\{L_{i},M_{i},Y_{i}\mid i\in\Z \}$
satisfying the following  nonvanishing $\lambda$-brackets:
\begin{eqnarray}
&&\label{pro3.31}[L_i\, {}_\lambda \, L_j]=(\partial+2\lambda) L_{i+j},\
[L_i\, {}_\lambda \, M_j]=(\partial+\lambda) M_{i+j},\
~~~[M_i\, {}_\lambda \, L_j]=\lambda M_{i+j}, \\
&&\label{pro3.34}
[L_i\, {}_\lambda \, Y_j]=(\partial+\frac{3}{2}\lambda) Y_{i+j},\
[Y_i\, {}_\lambda \, L_j]=(\frac{1}{2}\partial+\frac{3}{2}\lambda) Y_{i+j},\
[Y_i\, {}_\lambda \, Y_j]=(\partial+2\lambda)M_{i+j}
\end{eqnarray}
for any $i,j\in \Z.$
And the  {\it extended loop Schr\"{o}dinger-Virasoro Lie conformal algebra}  $\mathfrak{\tilde{csv}}$ has a  $\C[\partial]$-basis $\{L_{i},M_{i},Y_{i},N_{i}\mid i\in\Z \}$
subject to the nonvanishing $\lambda$-brackets defined in \eqref{pro3.31} and \eqref{pro3.34} together with the following nonvanishing
relations:
\begin{eqnarray}
\label{def1.11}&&[L_i\, {}_\lambda \, N_j]=(\partial+\lambda) N_{i+j},\ \ \
[N_i\, {}_\lambda \, L_j]=\lambda N_{i+j},\
 [N_i\, {}_\lambda \, M_j]=2 M_{i+j}, \\
&&\label{def1.14}
[M_i\, {}_\lambda \, N_j]=-2 M_{i+j},\
~~~~~~[N_i\, {}_\lambda \, Y_j]=Y_{i+j},\
~~~[Y_i\, {}_\lambda \, N_j]=-Y_{i+j}
\end{eqnarray}
for any $i,j\in \Z$.

We remark that the loop Schr\"{o}dinger-Virasoro Lie conformal algebra and the extended loop Schr\"{o}dinger-Virasoro  Lie conformal algebra both contain
\begin{eqnarray*}
\label{cw1.19} \mathcal{CV}=\bigoplus_{i\in\Z}\C[\partial]L_i
\quad {\rm and}\quad  \mathcal{CHV}=\bigoplus_{i\in\Z}(\C[\partial]L_i\oplus\C[\partial]M_i)
\end{eqnarray*}
as their  subalgebras which are isomorphic to the loop Virasoro
Lie conformal algebra  and loop Heisenberg-Virasoro Lie conformal algebra, respectively.
So some results about
these two Lie conformal algebras in \cite{WCY} and\cite{FSW} can be applied in the present paper.

 The rest of this paper
is organized as follows. In Section $2$, we will define $\mathfrak{csv}$ and $\mathfrak{\tilde{csv}}$ after recalling some basic definitions of Lie conformal algebras. In Sections 3 and 4, conformal derivations and second cohomology groups of $\mathfrak{csv}$
and $\mathfrak{\tilde{csv}}$
are investigated. Furthermore, nontrivial free conformal modules of
rank one and $\Z$-graded free intermediate series modules over $\mathfrak{csv}$ and $\mathfrak{\tilde{csv}}$
are classified in Sections 5 and 6, respectively.

Throughout this paper, all vector spaces, linear maps, and tensor products are considered to be over
the field $\C$ of complex numbers. We denote by $\C,$ $\C^*$ and $\Z$ the sets of complex numbers, nonzero complex numbers and integers, respectively. The main results of the present paper are summarized in Theorems \ref{theo4.3}, \ref{theo5.1}, \ref{theo6.1} and \ref{theo7.6}.

\section{Preliminaries}
In this section, we recall some basic definitions and results related to Lie conformal algebras in \cite{DK,K1,K3} for later use.
A {\em formal distribution} with coefficients in a
vector space $U$ is a power series of the following form:
\begin{equation*}
a(z)=\mbox{$\sum\limits_{n\in\Z}$}a_{(n)} z^{-n-1},
\end{equation*}
where $a_{(n)}\in U$ and $z$ is an indeterminate. The vector space of these series is denoted by $U[[z,z^{-1}]]$.
A formal distribution $a(z,w)$  in two variables is similarly defined as a series of the form  $
\mbox{$\sum\limits_{m,n\in\Z}$}a_{(m,n)} z^{-m-1}w^{-n-1},$ and the space of these series is denoted by $U[[z, z^{-1},w,w^{-1}]]$. A formal distribution $a(z,w)\in U[[z,z^{-1},w,w^{-1}]]$ is called {\it local} if there exists some positive integer $N$ such that  $$(z-w)^N a(z,w)=0.$$
Let $\mathfrak{g}$ be a Lie algebra.
Two formal distributions $a(z), b(z)\in \mathfrak{g}[[z,z^{-1}]]$ are
called {\it pairwise local} if $[a(z),b(w)]$ is local in $\mathfrak{g}[[z,z^{-1},w,w^{-1}]]$.
\begin{defi}\label{2.1}{\rm(\!\!\cite{DK})}
A formal distribution Lie algebra is a pair $(\mathfrak{g}, F)$, where $\mathfrak{g}$ is a Lie algebra and $F$ is a family of pairwise local formal distributions whose coefficients $F$ span $\mathfrak{g}$.
\end{defi}
The {\it  delta distribution} is the $\C$-valued formal distribution
\begin{equation*}
\delta(z,w)=\mbox{$\sum\limits_{n\in\Z}$}z^n w^{-n-1}.
\end{equation*}
The following proposition describes an equivalent condition for a formal
distribution to be local.
\begin{prop}\label{pro2.2}{\rm(\!\!\cite{K3})}
A formal distribution $a(z,w)\in U[[z,z^{-1},w,w^{-1}]]\!$ is local if and only if
$a(z,w)$ can be written as
\begin{equation*}
a(z,w)=\mbox{$\sum\limits_{j=0}^\infty$}c^j(w)\frac{\partial^j_w\delta(z,w)}{j!}
\mbox{ \ (finite sum)}
\end{equation*}
for some $c^j(w)\in U[[w,w^{-1}]].$
\end{prop}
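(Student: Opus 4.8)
The plan is to prove the two implications separately, first recording the delta-distribution identities that power both. Writing $\delta^{(j)}(z,w)=\partial_w^{j}\delta(z,w)/j!$ and ${\rm Res}_z$ for the operation extracting the coefficient of $z^{-1}$, a direct coefficient comparison in $\delta(z,w)=\sum_{n}z^nw^{-n-1}$ gives $(z-w)\delta(z,w)=0$. Differentiating this relation in $w$ and using $\partial_w\delta^{(j)}=(j+1)\delta^{(j+1)}$, I would obtain by induction on $j$ the identities
\begin{equation*}
(z-w)\,\delta^{(j)}(z,w)=\delta^{(j-1)}(z,w)\ \ (j\ge1),\qquad (z-w)^{j+1}\delta^{(j)}(z,w)=0.
\end{equation*}
The second identity settles the easy implication immediately: if $a(z,w)=\sum_{j=0}^{N-1}c^j(w)\delta^{(j)}(z,w)$ is a finite sum, then since $j\le N-1$ forces $(z-w)^N\delta^{(j)}=0$, multiplying through by $(z-w)^N$ yields $(z-w)^Na(z,w)=0$, so $a$ is local.

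For the converse, I would assume $(z-w)^Na(z,w)=0$ and argue by induction on $N$. The base case $N=1$ is the claim that $(z-w)a=0$ forces $a(z,w)={\rm Res}_z a(z,w)\cdot\delta(z,w)$. First I would expand $a=\sum_{m,n}a_{(m,n)}z^{-m-1}w^{-n-1}$ and compute that the coefficient of $z^{-m-1}w^{-n-1}$ in $(z-w)a$ equals $a_{(m+1,n)}-a_{(m,n+1)}$; hence $(z-w)a=0$ is equivalent to $a_{(m+1,n)}=a_{(m,n+1)}$ for all $m,n$, i.e.\ $a_{(m,n)}$ depends only on $m+n$. Substituting this back and comparing with the expansion of $c(w)\delta(z,w)$ then identifies $a$ with $({\rm Res}_z a)\,\delta$, which is the base case.

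For the inductive step I assume the statement at locality order $N-1$ and, given $(z-w)^Na=0$, set $c^{N-1}(w)={\rm Res}_z\big((z-w)^{N-1}a(z,w)\big)$ and
\begin{equation*}
a'(z,w)=a(z,w)-c^{N-1}(w)\,\delta^{(N-1)}(z,w).
\end{equation*}
Applying the base case to $b=(z-w)^{N-1}a$ (which satisfies $(z-w)b=(z-w)^Na=0$) gives $(z-w)^{N-1}a=c^{N-1}(w)\delta$, and iterating the first delta identity gives $(z-w)^{N-1}\delta^{(N-1)}=\delta$; subtracting, I get $(z-w)^{N-1}a'=0$. The induction hypothesis then expresses $a'=\sum_{j=0}^{N-2}c^j(w)\delta^{(j)}$ as a finite sum, and restoring the removed term yields the required expansion of $a$.

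The only genuinely computational point is the base case, where the operator equation $(z-w)a=0$ must be converted into the recurrence $a_{(m+1,n)}=a_{(m,n+1)}$ and the resulting diagonal distributions recognized as multiples of $\delta$; the rest is formal manipulation of the two delta identities. I expect the structural heart of the argument to be the inductive peeling: the realization that ${\rm Res}_z\big((z-w)^{N-1}a\big)$ is precisely the coefficient whose subtraction of $c^{N-1}(w)\delta^{(N-1)}$ cancels the top-order singularity along the diagonal and lowers the order of locality by one, so that the finiteness of the expansion falls out of the induction rather than having to be argued separately.
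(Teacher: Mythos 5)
Your argument is correct and complete: the delta-function identities, the base case converting $(z-w)a=0$ into the recurrence $a_{(m+1,n)}=a_{(m,n+1)}$, and the inductive peeling via $c^{N-1}(w)={\rm Res}_z\big((z-w)^{N-1}a\big)$ all check out. The paper itself offers no proof of this proposition — it is quoted from \cite{K3} — and what you have written is essentially the standard argument given in that reference, so there is nothing to compare beyond noting the match.
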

\begin{defi}{\rm(i)\ (\!\!\cite{K3})}\label{D1}
A {\it Lie conformal algebra} is a $\C[\partial]$-module $R$ endowed with a  $\lambda$-bracket $[a{}\, _\lambda \, b]$
which defines a
linear map $R\otimes R\rightarrow R[\lambda]$, where $\lambda$ is an indeterminate and $R[\lambda]=\C[\lambda]\otimes R$, subject to the following axioms:
\begin{equation}\label{conformal}
\aligned
&[\partial a\,{}_\lambda \,b]=-\lambda[a\,{}_\lambda\, b],\ \ \ \
[a\,{}_\lambda \,\partial b]=(\partial+\lambda)[a\,{}_\lambda\, b]\quad    {\rm(conformal\ sesquilinearity)},\\
&[a\, {}_\lambda\, b]=-[b\,{}_{-\lambda-\partial}\,a]\quad {\rm (skew\text{-}symmetry)},\\
&[a\,{}_\lambda\,[b\,{}_\mu\, c]]=[[a\,{}_\lambda\, b]\,{}_{\lambda+\mu}\, c]+[b\,{}_\mu\,[a\,{}_\lambda \,c]]\quad{\rm(Jacobi\ identity)},
\endaligned
\end{equation}
for all $a,b,c\in R.$

{\rm (ii)} A Lie conformal algebra $R$ is called {\it $\Z$-graded} if $R=\oplus_{i\in\Z}R_i$, where each $R_i$ is a $\C[\partial]$-submodule
such that $[R_i\,{}_\lambda\, R_j]\subset R_{i+j}[\lambda]$ for any $i,j\in \Z$.
\end{defi}

Note that $\mathfrak{\tilde{csv}}=\oplus_{i\in\Z}\mathfrak{\tilde{csv}}_i$ is $\Z$-graded with $$\mathfrak{\tilde{csv}}_i=\C[\partial]L_i\oplus\C[\partial]M_i\oplus\C[\partial]Y_i\oplus\C[\partial]N_i$$ and $\mathfrak{{csv}}$ is its  $\Z$-graded subalgebra.
\begin{defi}\label{defi-module}{\rm (\!\! \cite{CK})} A  conformal module $M$ over a Lie conformal algebra $R$ is a  $\C[\partial]$-module endowed with a $\lambda$-action $R\otimes M\rightarrow M[\lambda]$ such that
\begin{equation*}
(\partial a)\,{}_\lambda\, v=-\lambda a\,{}_\lambda\, v,\ a{}\,{}_\lambda\, (\partial v)=(\partial+\lambda)a\,{}_\lambda\, v,\
a\,{}_\lambda\, (b{}\,_\mu\, v)-b\,{}_\mu\,(a\,{}_\lambda\, v)=[a\,{}_\lambda\, b]\,{}_{\lambda+\mu}\, v,\end{equation*}
for all $a,b\in R$,  $v\in M$.
\end{defi}

Suppose that $(\mathfrak{g}, F)$ is a formal distribution Lie algebra.
Define
\begin{equation*}\label{abw2.3}
[a(w)\,{}_\lambda \, b(w)]=F^\lambda_{z,w}[a(z),b(w)]
\end{equation*}
for any $a(w),b(w)\in F$.  Then \begin{eqnarray}\label{abw2.-3}
[a(w)\,{}_\lambda \, b(w)]=F^\lambda_{z,w}[a(z),b(w)]={\rm Res}_z e^{\lambda(z-w)}\mbox{$\sum\limits_{j=0}^\infty$}c^j(w)\frac{\partial^j_w\delta(z,w)}{j!}=\sum_{j=0}^\infty \frac{\lambda^j}{j!}c^j(w),
\end{eqnarray} where we have assumed $[a(z),b(w)]=\mbox{$\sum\limits_{j=0}^\infty$}c^j(w)\frac{\partial^j_w\delta(z,w)}{j!}$ for some $c^j(w)\in\mathfrak g[[w,w^{-1}]]$ by Proposition \ref{pro2.2} and used the fact $\frac{1}{(j+1)!}(z-w)\partial_w^{j+1}\delta(z,w)=\frac{1}{j!}\partial_w^j\delta(z,w)$ for any nonnegative integer $j$.   One can easily check that these \mbox{$\lambda$}-brackets satisfy relations  (\ref{conformal}).
Given a formal distribution Lie algebra $(\mathfrak{g}, F)$, we may always include $F$ in the minimal family $F^c$ of pairwise local distributions which is closed under
the derivative $\partial$ \big(the action of $\partial$ on an element $a(w)\in F^c$ is given by $\partial_w a(w)=(\partial a)(w)$\big) and the $\lambda$-brackets.
An important fact worthwhile  to  point out is that $(\mathfrak g, F^c)$  actually forms a Lie conformal algebra of $\mathfrak{g}.$

Set \begin{eqnarray*}&L_i(z)=\sum_{n\in\Z}L_{n,i}z^{-n-2},\quad\ \ \ M_i(z)=\sum_{n\in\Z}M_{n,i}z^{-n-1},\\
 &Y_i(z)=\sum_{p\in\frac{1}{2}+\Z}Y_{p,i}z^{-p-\frac{3}{2}},\quad N_i(z)=\sum_{n\in\Z}N_{n,i}z^{-n-1}\end{eqnarray*} for any $i\in\Z.$
Then a straightforward computation by using \eqref{de1.1} and \eqref{de1.2} shows
\begin{equation}\label{csv2.2}
\aligned
&[L_i(z),L_j(w)]=(\partial_w L_{i+j}(w))\delta(z,w)+2L_{i+j}(w)\partial_w\delta(z,w),\\
&
[L_i(z),M_j(w)]=(\partial_w M_{i+j}(w))\delta(z,w)+M_{i+j}(w)\partial_w\delta(z,w),\\
&
[L_i(z),Y_j(w)]=(\partial_w Y_{i+j}(w))\delta(z,w)+\frac{3}{2}Y_{i+j}(w)\partial_w\delta(z,w),\\&
[Y_i(z),Y_j(w)]=(\partial_w M_{i+j}(w))\delta(z,w)+2M_{i+j}(w)\partial_w\delta(z,w),\\&
[L_i(z),N_j(w)]=(\partial_wN_{i+j}(w))\delta(z,w)+N_{i+j}(w)\partial_w\delta(z,w),\\&
[N_i(z),M_j(w)]=2M_{i+j}(w)\delta(z,w),\\&
[N_i(z),Y_j(w)]=Y_{i+j}(w)\delta(z,w),\quad\forall\ i,j\in\Z.
\endaligned
\end{equation}
It follows from  \eqref{abw2.-3} and \eqref{csv2.2} that
\begin{eqnarray*}
&&[L_i(w)\, {}_\lambda \, L_j(w)]=(\partial+2\lambda) L_{i+j}(w),\quad
[L_i(w)\, {}_\lambda \, M_j(w)]=(\partial+\lambda) M_{i+j}(w),\\
&&[L_i(w)\, {}_\lambda \, Y_j(w)]=(\partial+\frac{3}{2}\lambda) Y_{i+j}(w),\quad
[Y_i(w)\, {}_\lambda \, Y_j(w)]=(\partial+ 2\lambda)M_{i+j}(w),\\
&&[L_i(w)\, {}_\lambda \, N_j(w)]=(\partial+\lambda)N_{i+j}(w),\quad
[N_i(w)\, {}_\lambda \, M_j(w)]=2 M_{i+j}(w),\\
&&[N_i(w)\, {}_\lambda \, Y_j(w)]=Y_{i+j}(w),\quad \forall\ i,j\in\Z,
\end{eqnarray*}which are consistent with relations \eqref{pro3.31}-\eqref{def1.14}.

\section{Conformal derivations of $\mathfrak{csv}$ and $\mathfrak{\tilde{csv}}$}
In this section, we first recall some definitions related to conformal derivations.
Let $R$ be a Lie conformal algebra.
Then a linear map $d_\lambda: R\rightarrow R[\lambda]$ is called a {\em conformal derivation} of $R$ if
\begin{equation*}
\aligned
d_\lambda(\partial a)=(\partial+\lambda)d_\lambda a,\ \
d_\lambda([a\,{}_\mu \,b])=[(d_\lambda a)\,{}_{\lambda+\mu} \,b]+[a\,{}_\mu \,(d_\lambda b)]\quad{\rm for\ any}\ a, b\in R.
\endaligned
\end{equation*}
For simplicity, we denote $d_\lambda$ by $d.$
For any $x\in R$, we can easily get the map ${\rm ad}_x$ defined by $({\rm ad}_x)_\lambda y= [x\, {}_\lambda\, y]$ for $y\in R$ is a conformal derivation of $R$.
All conformal derivations of this kind are called {\it inner}.
Denote by ${\rm Der\,}(R)$
and ${\rm Inn\,}(R)$ the vector spaces of all conformal derivations and inner conformal derivations of $R$, respectively.

Now we aim to compute conformal derivations of the loop Schr\"{o}dinger-Virasoro Lie
conformal algebra $\mathfrak{csv}$ and the extended loop Schr\"{o}dinger-Virasoro Lie
conformal algebra $\mathfrak{\tilde{csv}}.$

Denote $$\C^\infty=\{\vec{a}=(a_c)_{c\in\Z}\mid a_c\in\C\mbox{ and }a_c=0\ \mbox{for\ all\ but\ finitely\ many}\ c\mbox{'}s\}.$$
For each $\vec{a}\in \C^\infty$, we define $$D_{\vec{a}}\, {}_\lambda\, (L_i)=\sum_{c\in\Z} a_c M_{i+c},\ D_{\vec{a}}\, {}_\lambda\,(M_i)=0\ {\rm and}\ D_{\vec{a}}\, {}_\lambda\,(Y_i)=0\quad {\rm for\ all} \ i\in\Z.$$
Clearly, $D_{\vec{a}}\in {\rm Der\,}(\mathfrak{csv})$, and we denote  the set of  all  such conformal derivations by $\C^\infty$.
 It  is easy  to verify that $D_{\vec{a}}\in {\rm Inn\,}(\mathfrak{csv})$ implies $D_{\vec{a}}=0$, which is equivalent to saying ${\rm Inn\,}(\mathfrak{csv})\cap \C^\infty=0$.

 Let $R$ be a $\Z$-graded Lie conformal algebra.
  For any $D\in{\rm Der\,}({R})$ and $c\in\Z$,  define
 $$D^c(x_i)=\pi_{i+c} D(x_i),\quad\forall\ x_i\in R_i,$$ where  $\pi_{k}$ is the natural projection from $\C[\lambda]\otimes {R}$ onto $\C[\lambda]\otimes {R}_k$.
Then $D^c$ is a conformal derivation and $D=\sum_{c\in\Z} D^c$ in the sense that for any $x\in R$ only finitely many $D^c_\lambda(x)\neq0$.
Let ${({\rm Der\,}({R}))}^c$ be the space of conformal derivations of degree $c$, i.e.,
 $${\big({\rm Der\,}({R})\big)}^c=\Big\{D\in {\rm Der\,}({R})\mid D_\lambda({R}_i)\subset {R}_{i+c}[\lambda],\ \ \ \forall\, i\in\Z\Big\}.$$

\begin{lemm}\label{lc1}
 For any $D^c\in {\big({\rm Der\,}(\mathfrak{csv})\big)}^c$, there exists an $a_c\in \C$  such that $D^c-D_{\vec a}\in {\rm Inn\,}(\mathfrak{csv})$, where $\vec a\in \C^\infty$ with the $c$-th entry $a_c$ and zero elsewhere.
\end{lemm}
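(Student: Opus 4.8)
The plan is to write $D^c$ in components and exploit the fact that the bracket $[L_i\,{}_\mu\, L_j]$ lands entirely in the $L$-sector, which decouples the resulting constraints. Concretely, since $D^c$ has degree $c$, I would write $D^c_\lambda(L_i)=f_i\,L_{i+c}+g_i\,M_{i+c}+h_i\,Y_{i+c}$, and similarly expand $D^c_\lambda(M_i)$ and $D^c_\lambda(Y_i)$, with all coefficients polynomials in $\C[\partial,\lambda]$ (conformal sesquilinearity guarantees that $D^c$ is determined by these values on the generators). Applying the conformal derivation identity to $[L_i\,{}_\mu\, L_j]=(\partial+2\mu)L_{i+j}$ and using that $[L\,{}_\nu\, L]\in\C[\partial,\nu]L$, $[M\,{}_\nu\, L]\in\C[\nu]M$, $[Y\,{}_\nu\, L]\in\C[\partial,\nu]Y$, $[L\,{}_\mu\, M]\in\C[\partial,\mu]M$ and $[L\,{}_\mu\, Y]\in\C[\partial,\mu]Y$, I would collect the $L$-, $M$- and $Y$-components separately. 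This yields three \emph{independent} functional equations: one for the family $\{f_i\}$ (which is exactly the defining equation of a degree-$c$ conformal derivation of the loop Virasoro subalgebra $\mathcal{CV}$), one for $\{g_i\}$, and one for $\{h_i\}$.

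Next I would solve each equation and peel off an inner derivation. The $f$-equation is the loop Virasoro derivation equation, so by the classification in \cite{WCY} its solutions are inner, namely $f_i=A(-\lambda)(\partial+2\lambda)$ for a single $A\in\C[\partial]$ independent of $i$; subtracting ${\rm ad}_{A(\partial)L_c}$, whose action on $L_i$ is purely in the $L$-sector, reduces to $f_i=0$. With $f_i=0$, the $g$-equation becomes the equation governing the $M$-part of a conformal derivation of the loop Heisenberg-Virasoro subalgebra $\mathcal{CHV}$; invoking \cite{FSW} (or analyzing it directly by setting $\mu=0$ and comparing $\partial$-degrees) shows $g_i=a_c+\lambda B(-\lambda)$ for a constant $a_c\in\C$ and some $B\in\C[\partial]$, both independent of $i$. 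The constant $a_c$ survives precisely because the map $L_i\mapsto a_c M_{i+c}$ solves the $g$-equation; this is the nontrivial derivation $D_{\vec a}$. Subtracting ${\rm ad}_{B(\partial)M_c}$ (which acts on $L_i$ by $B(-\lambda)\lambda\,M_{i+c}$) removes the $\lambda B(-\lambda)$ term, leaving $g_i=a_c$. For the $h$-equation I expect the analogous analysis to give only the inner solution $h_i=C(-\lambda)(\tfrac12\partial+\tfrac32\lambda)$ with no surviving constant: the asymmetry between $[L\,{}_\lambda\, Y]=(\partial+\tfrac32\lambda)Y$ and $[Y\,{}_\lambda\, L]=(\tfrac12\partial+\tfrac32\lambda)Y$ means that a constant map $L_i\mapsto Y_{i+c}$ is \emph{not} a derivation, so subtracting ${\rm ad}_{C(\partial)Y_c}$ yields $h_i=0$. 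None of these three subtractions disturbs the $L$-sector components already normalized, so after them $D^c_\lambda(L_i)=a_c M_{i+c}$.

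Finally, with $D^c_\lambda(L_i)=a_c M_{i+c}$ fixed, I would apply the derivation identity to $[L_i\,{}_\mu\, M_j]=(\partial+\mu)M_{i+j}$ and to $[L_i\,{}_\mu\, Y_j]=(\partial+\tfrac32\mu)Y_{i+j}$. Because $D^c_\lambda(L_i)$ is now a scalar multiple of $M_{i+c}$ and $[M\,{}_\nu\, M]=[M\,{}_\nu\, Y]=0$, the term $[(D^c_\lambda L_i)\,{}_{\lambda+\mu}\, M_j]$ (respectively with $Y_j$) vanishes, and each identity collapses to a relation involving only the components of $D^c_\lambda(M_j)$ (respectively $D^c_\lambda(Y_j)$). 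Setting $\mu=0$ in each resulting component equation forces the coefficient families to be independent of the index and then produces a relation of the form $(\partial+\lambda)\varphi=\partial\varphi$, i.e. $\lambda\varphi=0$, so $\varphi=0$; hence every component of $D^c_\lambda(M_i)$ and of $D^c_\lambda(Y_i)$ vanishes. Therefore $D^c-{\rm ad}_{A(\partial)L_c}-{\rm ad}_{B(\partial)M_c}-{\rm ad}_{C(\partial)Y_c}=D_{\vec a}$ with $\vec a$ having $c$-th entry $a_c$, which proves the lemma. I expect the main obstacle to be the middle step: rigorously solving the $g$- and $h$-functional equations to show that the solution spaces are exactly the inner parts together with (for $g$ only) the one-dimensional family $a_c$, and keeping careful track of which inner subtraction affects which component so that previously eliminated terms are not reintroduced.
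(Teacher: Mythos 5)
Your proposal is correct and follows essentially the same route as the paper: expand $D^c$ on the $\C[\partial]$-generators, extract three decoupled functional equations from $[L\,{}_\mu\,L]$, normalize by subtracting ${\rm ad}$ of suitable multiples of $L_c$, $M_c$, $Y_c$ so that only the constant $a_c M_{i+c}$ survives, and then use $[L\,{}_\mu\,M]$ and $[L\,{}_\mu\,Y]$ (whose $D^c(L)$-contribution vanishes since $[M\,{}_\nu\,M]=[M\,{}_\nu\,Y]=0$) to force $D^c_\lambda(M_i)=D^c_\lambda(Y_i)=0$. The only cosmetic difference is that you optionally cite the loop Virasoro and loop Heisenberg--Virasoro classifications where the paper just sets $\mu=0$ and reads off the polynomial form directly.
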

\begin{proof}   Assume \begin{equation}\label{expres}D^c_\lambda(L_i)=f_{1,i}(\partial,\lambda)L_{i+c}+f_{2,i}(\partial,\lambda)M_{i+c}+f_{3,i}(\partial,\lambda)Y_{i+c}\end{equation} for any $i\in\Z$ and some $f_{j,i}(\partial,\lambda)\in\C[\partial,\lambda]$.
It follows from applying $D^c_\lambda$ to $[L_0\, {}_\mu \, L_i]$  $=(\partial+2\mu)L_i$ that
\begin{eqnarray*}
&&(\partial+\lambda+2\mu)(f_{1,i}(\partial,\lambda)L_{i+c}+f_{2,i}(\partial,\lambda)M_{i+c}+f_{3,i}(\partial,\lambda)Y_{i+c})\\
&=&\big((\partial+2\lambda+2\mu)f_{1,0}(-\lambda-\mu,\lambda)+(\partial+2\mu)f_{1,i}(\partial+\mu,\lambda)\big)L_{i+c}+\\
&&
 \big((\lambda+\mu)f_{2,0}(-\lambda-\mu,\lambda)+(\partial+\mu)f_{2,i}(\partial+\mu,\lambda)\big)M_{i+c}+\\
&&
\big((\frac{1}{2}\partial+\frac{3}{2}\lambda+\frac{3}{2}\mu)f_{3,0}(-\lambda-\mu,\lambda)+(\partial+\frac{3}{2}\mu)f_{3,i}(\partial+\mu,\lambda)\big)Y_{i+c},\end{eqnarray*}
from which by  respectively comparing the coefficients of $L_{i+c},$ $M_{i+c}$ and $Y_{i+c}$ one has
\begin{eqnarray*}\label{c1}
&&(\partial+\lambda+2\mu)f_{1,i}(\partial,\lambda)=(\partial+2\lambda+2\mu)f_{1,0}(-\lambda-\mu,\lambda)+(\partial+2\mu)f_{1,i}(\partial+\mu,\lambda),\\
&&
\label{c2}
(\partial+\lambda+2\mu)f_{2,i}(\partial,\lambda)=(\lambda+\mu)f_{2,0}(-\lambda-\mu,\lambda)+(\partial+\mu)f_{2,i}(\partial+\mu,\lambda),\\
&&
\label{c3}
(\partial+\lambda+2\mu)f_{3,i}(\partial,\lambda)
=\frac{1}{2}(\partial+3\lambda+3\mu)f_{3,0}(-\lambda-\mu,\lambda)+
(\partial+\frac{3}{2}\mu)f_{3,i}(\partial+\mu,\lambda).
\end{eqnarray*}
In particular, setting $\mu=0$  in above three relations, we correspondingly  have:
\begin{eqnarray}
\label{4.2}&& \lambda f_{1,i}(\partial,\lambda)=(\partial+2\lambda)f_{1,0}(-\lambda,\lambda),\\
\label{4.3}&&\lambda f_{2,i}(\partial,\lambda)=\lambda f_{2,0}(-\lambda,\lambda),\\
\label{4.4}&&
\lambda f_{3,i}(\partial,\lambda)=(\frac{1}{2}\partial+\frac{3}{2}\lambda) f_{3,0}(-\lambda,\lambda).
\end{eqnarray}
Note that  if we set $F_1(\lambda)=\frac{f_{1,0}(\lambda,-\lambda)}{\lambda}$ and $F_3(\lambda)=\frac{f_{3,0}(\lambda,-\lambda)}{\lambda},$ then  from  \eqref{4.2} and \eqref{4.4} that both  $F_1(\lambda)$ and $F_3(\lambda)$ are elements of $\C[\lambda]$. Write $f_{2,0}(-\lambda,\lambda)=\lambda H(\lambda)+a_c$ for some $H(\lambda)\in\C[\lambda]$ and $a_c\in\C$.
 Then it follows from   replacing $D^c$ by $$D^c+{\rm ad}_{{F_1(\partial)}L_c}+{\rm ad}_{{F_3(\partial)}Y_c}-{\rm ad}_{{H(-\partial)}M_c}$$ and relations \eqref{4.2}-\eqref{4.4} we see that \eqref{expres} simply turns out to be
 $$D^c_\lambda(L_i)=a_cM_{i+c}\quad {\rm for\ any}\ i\in \Z.$$

 Assume $$D^c_\lambda(M_i)=g_{1,i}(\partial,\lambda)L_{i+c}+g_{2,i}(\partial,\lambda)M_{i+c}+g_{3,i}(\partial,\lambda)Y_{i+c}$$ for some $g_{j,i}(\partial, \lambda)\in\C [\partial,\lambda]$.  Applying $D^c_\lambda$ to $[L_0\, {}_\mu \, M_i]=(\partial+\mu)M_i$ gives rise to
\begin{eqnarray*}
&&(\partial+\lambda+\mu)(g_{1,i}(\partial,\lambda)L_{i+c}+g_{2,i}(\partial,\lambda)M_{i+c}+g_{3,i}(\partial,\lambda)Y_{i+c})\\
&=&(\partial+2\mu)g_{1,i}(\partial+\mu,\lambda)L_{i+c}+(\partial+\mu)g_{2,i}(\partial+\mu,\lambda)M_{i+c}+(\partial+\frac{3}{2}\mu)g_{3,i}(\partial+\mu,\lambda)
Y_{i+c},
\end{eqnarray*}
which is equivalent to the following three relations:
\begin{eqnarray}\label{4.8}
(\partial+\lambda+\mu)g_{1,i}(\partial,\lambda)&\!\!\!=\!\!\!&(\partial+2\mu)g_{1,i}(\partial+\mu,\lambda),\\
\label{4.9}
(\partial+\lambda+\mu)g_{2,i}(\partial,\lambda)&\!\!\!=\!\!\!&(\partial+\mu)g_{2,i}(\partial+\mu,\lambda),\\
\label{4.10}
(\partial+\lambda+\mu)g_{3,i}(\partial,\lambda)&\!\!\!=\!\!\!&(\partial+\frac{3}{2}\mu)g_{3,i}(\partial+\mu,\lambda), \quad \forall\ i\in\Z.
\end{eqnarray}
Comparing the highest degrees of monomials with respect to $\lambda$ in \eqref{4.8}-\eqref{4.10}
one can immediately get
$$g_{1,i}(\partial,\lambda)=g_{2,i}(\partial,\lambda)=g_{3,i}(\partial,\lambda)=0,$$ namely,  $D^c_\lambda(M_i)=0$ for any $i\in\Z$. Similarly, one can show that $D^c_\lambda(Y_i)=0$ for any $i\in\Z$. This completes the proof.
\end{proof}

Now we are ready to state  the main result of this section.
\begin{theo}\label{theo4.3} We have the following decompositions:

{\rm (1)} ${\rm Der\,}  (\mathfrak{csv})={\rm Inn\,}(\mathfrak{csv})\oplus \C^\infty;$

 {\rm (2)} ${\rm Der\,}(\mathfrak{\tilde{csv}})={\rm Inn\,}(\mathfrak{\tilde{csv}}).$
\end{theo}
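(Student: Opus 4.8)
The plan is to reduce everything to the graded components and then reassemble. Recall that for a $\Z$-graded $R$ any $D\in{\rm Der\,}(R)$ decomposes as $D=\sum_{c\in\Z}D^c$ with $D^c$ of degree $c$, and that for each fixed homogeneous $x$ only finitely many $D^c_\lambda(x)$ are nonzero. In both parts the inner-correcting element and the scalar(s) attached to a given $D^c$ will be manufactured solely out of $D^c_\lambda(L_0)$; since $D_\lambda(L_0)=\sum_c D^c_\lambda(L_0)$ is a finite sum, these data vanish for all but finitely many $c$, which is exactly what is needed to add the degree-$c$ conclusions back together into a single (finite) inner derivation plus an element of $\C^\infty$.

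For (1) I would invoke Lemma \ref{lc1} directly: for each $c$ it provides a scalar $a_c$ and an element $y_c$ of degree $c$ with $D^c=D_{\vec a^{(c)}}+{\rm ad}_{y_c}$, where $\vec a^{(c)}\in\C^\infty$ is supported at $c$. By the finiteness noted above, $y:=\sum_c y_c$ lies in $\mathfrak{csv}$ and $\vec a:=(a_c)_{c\in\Z}\in\C^\infty$, so $D={\rm ad}_y+D_{\vec a}$ and hence ${\rm Der\,}(\mathfrak{csv})={\rm Inn\,}(\mathfrak{csv})+\C^\infty$. Since it has already been observed that ${\rm Inn\,}(\mathfrak{csv})\cap\C^\infty=0$, this sum is direct, giving (1).

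For (2) I would first prove the analogue of Lemma \ref{lc1}: every $D^c\in\big({\rm Der\,}(\mathfrak{\tilde{csv}})\big)^c$ is inner. Writing $D^c_\lambda(L_i)=f_{1,i}L_{i+c}+f_{2,i}M_{i+c}+f_{3,i}Y_{i+c}+f_{4,i}N_{i+c}$ and applying $D^c$ to $[L_0\,{}_\mu\,L_i]=(\partial+2\mu)L_i$, the $L$-, $M$- and $Y$-components obey precisely relations \eqref{4.2}--\eqref{4.4}, so the same inner corrections ${\rm ad}_{F_1(\partial)L_c}$, ${\rm ad}_{F_3(\partial)Y_c}$ and $-{\rm ad}_{H(-\partial)M_c}$ used in Lemma \ref{lc1} let me assume $f_{1,i}=f_{3,i}=0$ and $f_{2,i}=a_c\in\C$. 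The new feature is the $N$-direction: the same relation forces $f_{4,i}(\partial,\lambda)=f_{4,0}(-\lambda,\lambda)$ to be independent of $i$ and of $\partial$, and its $\lambda$-divisible part is removed by ${\rm ad}_{g(\partial)N_c}$ because $[g(\partial)N_c\,{}_\lambda\,L_i]=\lambda g(-\lambda)N_{i+c}$, leaving $D^c_\lambda(L_i)=a_cM_{i+c}+b_cN_{i+c}$ with $b_c\in\C$.

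The decisive point — and where $\mathfrak{\tilde{csv}}$ parts from $\mathfrak{csv}$ — is that both scalars must now vanish. Applying $D^c$ to $[L_0\,{}_\mu\,N_0]=(\partial+\mu)N_0$ and using $[M_c\,{}_{\lambda+\mu}\,N_0]=-2M_c$ from \eqref{def1.14}, the $M$-component equation reads, at $\mu=0$, $\lambda h_2=-2a_c$ for the $M$-coefficient $h_2$ of $D^c_\lambda(N_0)$, whence $a_c=0$; similarly, applying $D^c$ to $[L_0\,{}_\mu\,M_0]=(\partial+\mu)M_0$ and using $[N_c\,{}_{\lambda+\mu}\,M_0]=2M_c$ from \eqref{def1.11} yields $\lambda g_2=2b_c$ for the $M$-coefficient $g_2$ of $D^c_\lambda(M_0)$, whence $b_c=0$. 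With $D^c_\lambda(L_i)=0$ in hand, applying $D^c$ to $[L_0\,{}_\mu\,M_i]$, $[L_0\,{}_\mu\,Y_i]$ and $[L_0\,{}_\mu\,N_i]$ collapses at $\mu=0$ to $(\partial+\lambda)(\,\cdot\,)=\partial(\,\cdot\,)$ on every component, forcing $D^c_\lambda(M_i)=D^c_\lambda(Y_i)=D^c_\lambda(N_i)=0$. Thus the residual derivation vanishes and $D^c$ is inner; summing over $c$ as in (1) yields ${\rm Der\,}(\mathfrak{\tilde{csv}})={\rm Inn\,}(\mathfrak{\tilde{csv}})$. I expect the main obstacle to be precisely the $N$-direction bookkeeping: checking that $f_{4,i}$ is a single scalar-valued function, cleanly separating off the constant $b_c$, and then extracting the two scalar-killing identities $\lambda h_2=-2a_c$ and $\lambda g_2=2b_c$ — the step that has no counterpart in the computation for $\mathfrak{csv}$ and that is responsible for the absence of a $\C^\infty$-summand in the extended case.
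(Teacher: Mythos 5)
Your argument is correct and follows essentially the same route as the paper: reduce to homogeneous components via Lemma \ref{lc1}, deduce finiteness of the sum $\sum_{c}D^c$ from the finiteness of $D_\lambda(L_0)$ (since all the data $a_c$, $y_c$ are read off from $D^c_\lambda(L_0)$), and in the extended case kill the residual constants by a degree-in-$\lambda$ comparison using the brackets $[N\,{}_\lambda\,M]=2M$ and $[M\,{}_\lambda\,N]=-2M$. The only (harmless) organizational deviation is in part (2): you split off the constant $b_c$ from the $N$-component of $D^c_\lambda(L_i)$ and then eliminate $a_c$ and $b_c$ by two symmetric computations on $[L_0\,{}_\mu\,N_0]$ and $[L_0\,{}_\mu\,M_0]$, whereas the paper removes the entire $N$-component at once via the relation $\frac{r_0(-\lambda,\lambda)}{\lambda}=\frac{1}{2}m_i(\partial,\lambda)\in\C[\lambda]$ before showing $a_c=0$.
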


\begin{proof} (1) For any $D=\sum_{c\in\Z}D^c\in{\rm Der\,}(\mathfrak{csv})$ with $D^c\in\big({\rm Der\,}(\mathfrak{csv})\big)^c$, to prove $D\in {\rm Inn\,}(\mathfrak{csv})\oplus \C^\infty$,  it is enough to show that the sum $\sum_{c\in\Z}D^c$ is finite, since  by Lemma \ref{lc1} $${\big({\rm Der\,}(\mathfrak{csv})\big)}^c\subseteq {\rm Inn\,}(\mathfrak{csv})\oplus \C^\infty,\quad \forall\ c\in\Z.$$ Suppose not, i.e., $J=\{c\in\Z\mid D^c\neq0\}$ is an infinite set. Due to Lemma \ref{lc1}, we can write  each $$D^c=\mbox{ad}_{\mu_c(\partial)L_c+\nu_c(\partial)M_c+\omega_c(\partial)Y_c}+D_{\vec a}$$ for some $\mu_c(\partial),\nu_c(\partial),\omega_c(\partial) \in\C[\partial]$ and $\vec a\in\C^\infty$ with the only possible  nonzero $c$-th entry $a_c$.  Note that $$D^c{}_\lambda(L_0)=(\partial+2\lambda)\mu_c(-\lambda)L_c+\lambda \nu_c(-\lambda)M_c+(\frac{1}{2}\partial+\frac{3}{2}\lambda)\omega_c(-\lambda)Y_c+a_c M_c\neq 0\quad {\rm for\ any}\ c\in J.$$ Hence,
\begin{eqnarray*}
D{}_\lambda(L_0)=\sum_{c\in\Z}(\partial+2\lambda)\mu_c(-\lambda)L_c+\lambda \nu_c(-\lambda)M_c+(\frac{1}{2}\partial+\frac{3}{2}\lambda)\omega_c(-\lambda)Y_c+a_c M_c
\end{eqnarray*}   is an infinite sum, which is not an element in $\mathfrak{csv}$, contradicting
the fact that $D$ is a  linear map from $\mathfrak{csv}$ to $\mathfrak{csv}$.

(2) For this, it suffices to show $\big({\rm Der\,}(\mathfrak{\tilde{csv}})\big)^c\subseteq {\rm Inn}(\mathfrak{\tilde{csv}})$ for any $c\in\Z$.  Let $D^c\in\big({\rm Der\,}(\mathfrak{\tilde{csv}})\big)^c.$
It follows from the proof of Lemma \ref{lc1} that we may assume
\begin{eqnarray}\label{eq4.14}
&D^c_\lambda(L_i)=a_cM_{i+c}+r_i(\partial,\lambda)N_{i+c}   \quad {\rm for \ some }\ a_c\in\C,\,r_i(\partial,\lambda)\in\C[\partial,\lambda],\\
&\label{eq4.1555}D^c_\lambda(M_i)=m_i(\partial,\lambda)N_{i+c}\quad {\rm for \ some }\ m_i(\partial,\lambda)\in\C[\partial,\lambda].
\end{eqnarray}

Applying $D^c_\lambda$ to $[L_0\, {}_\mu \, L_i]=(\partial+2\mu) L_{i}$, we have
\begin{eqnarray*}
&&(\partial+\lambda+2\mu)\big(a_cM_{i+c}+r_{i}(\partial,\lambda)N_{i+c}\big)\\
&\!\!\!=\!\!\!&(\partial+\lambda+2\mu)a_cM_{i+c}+
\big((\lambda+\mu)r_0(-\lambda-\mu,\lambda)+(\partial+\mu)r_i(\partial+\mu,\lambda)\big)N_{i+c},
\end{eqnarray*}which forces
\begin{equation}\label{partia3.11}
(\partial+\lambda+2\mu)r_{i}(\partial,\lambda)=(\lambda+\mu)r_0(-\lambda-\mu,\lambda)+(\partial+\mu)r_i(\partial+\mu,\lambda),\quad \forall\ i\in\Z.
\end{equation}
Taking $\mu=0$ in \eqref{partia3.11}, we have $r_i(\partial,\lambda)=r_0(-\lambda,\lambda)$ for all $i\in\Z.$
Hence, \eqref{eq4.14} can be rephrased as
\begin{equation*}
D^c_\lambda(L_i)=a_cM_{i+c}+r_0(-\lambda,\lambda)N_{i+c} \quad {\rm for \ all }\ i\in\Z.
\end{equation*}It follows from this and applying $D^c_\lambda$ to $[L_0\, {}_\mu \, M_i]=(\partial+\mu) M_{i}$ that $$ 2r_0(-\lambda,\lambda)+(\partial+\mu)m_i(\partial+\mu,\lambda)=(\partial+\lambda+\mu)m_i(\partial,\lambda),$$
which implies \begin{eqnarray}\label{rmmi3.10}
\frac{r_0(-\lambda,\lambda)}{\lambda}=\frac{1}{2} m_i(\partial,\lambda)\in\C[\lambda] \quad {\rm for \ all } \ i\in\Z.
\end{eqnarray}
 Then it follows from   replacing $D^c$ by $$D^c+{\rm ad}_{\frac{r_0(\partial,-\partial)}{\partial}N_c}$$ and relation \eqref{rmmi3.10} we see that \eqref{eq4.14} and \eqref{eq4.1555} simply turn out to be
 $$D^c_\lambda(L_i)=a_cM_{i+c}\quad {\rm and}\quad D^c_\lambda(M_i)=0,\quad  \forall \ i\in\Z.$$

We are going to show $D^c=0$, hence completing the proof. Now assume
  \begin{equation}\label{dcni3.12}
  D^c_\lambda(N_i)=b_{1,i}(\partial,\lambda)L_{i+c}+b_{2,i}(\partial,\lambda)M_{i+c}+b_{3,i}(\partial,\lambda)Y_{i+c}+b_{4,i}(\partial,\lambda)N_{i+c}
  \end{equation}
  for any $i\in\Z$ and some $b_{j,i}(\partial,\lambda)\in\C[\partial,\lambda]$.
It follows from applying $D^c_\lambda$ to $[L_0\, {}_\mu \, N_i]=(\partial+\mu) N_{i}$  we see that \begin{equation*}
(\partial+\lambda+\mu)D_\lambda^c(N_{i})=[L_0\, {}_\mu\, D_\lambda^c (N_i)]-2a_cM_{i+c},\quad\forall \ i\in\Z,
\end{equation*}
from which we deduce that \begin{equation}\label{eq-bj}
(\partial+\lambda+\mu)b_{2,i}(\partial,\lambda)=(\partial+\mu)b_{2,i}(\partial+\mu,\lambda)-2a_c,\quad\forall\ i\in\Z,\end{equation} where $b_{2,i}(\partial,\lambda)\in\C[\partial,\lambda]$ is  assumed to be
$$D_\lambda^c(N_i)\equiv b_{2,i}(\partial,\lambda)M_{i+c} \ \big({\rm mod} \bigoplus_{i\in\Z,\ X\in\{L, Y, N\}}\C[\partial,\lambda]X_i \big).$$
Comparing  the highest degrees of both sides of \eqref{eq-bj} with respect to $\lambda$ gives $b_{2,i}(\partial,\lambda)=0$ for any $i\in\Z$ and then $a_c=0$. Similarly, one can show that $$D_\lambda^c(N_i)\equiv 0 \ \big({\rm mod} \bigoplus_{i\in\Z,\ X\in\{L, M, Y, N\}\setminus\{S\}}\C[\partial,\lambda]X_i \big)\quad{\rm for\ any}\ S\in\{L, Y, N\}.$$ To sum up,  we have shown $$D^c_\lambda(L_i)=D^c_\lambda(M_i)=D^c_\lambda(N_i)=0,\quad \forall\ i\in\Z.$$
Now it is easy to show that $D^c_\lambda(Y_i)=0$ for any $i\in\Z$. Hence, $D^c=0$ for any $c\in\Z$.\end{proof}

\section{Second cohomology groups of $\mathfrak{csv}$ and $\mathfrak{\tilde{csv}}$}
Throughout this section,  we consider the second cohomology groups of   $\mathfrak{csv}$ and  $\mathfrak{\tilde{csv}}$ and {\em by a complex function we always mean a map from $\Z$ to $\C$.}

We shall briefly recall  the definition of the second cohomology group (see \cite{BKV,S,SK}). Let $R$ be a Lie conformal algebra.
A bilinear map $\phi_\lambda: R\times R \rightarrow \C[\lambda]$ is called   a {\it 2-cocycle}
on $R$ if the following conditions are satisfied:
\begin{equation*}
\aligned
&\phi_{\lambda}(a, b)=- \phi_{-\lambda}(b, a)\quad {\rm (skew\text{-}symmetry)},\\
&
\phi_{\lambda}(\partial a, b)=-\lambda \phi_{\lambda}(a, b)=-\phi_{\lambda}(a,\partial b)\quad {\rm (conformal \ sesquilinearity)},\\
&\phi_{\lambda+\mu}([a\,_\lambda \, b], c)=\phi_{\lambda}(a, [b\,_\mu\, c])-\phi_{\mu}(b, [a\,_\lambda\, c])\quad{\rm(Jacobi\ identity)}
\endaligned
\end{equation*}
for all  $a,b,c\in R.$
Denote by $C^{2}(R,\ \C)$ the vector space of
2-cocycles on
 $R$.
For any linear map $f:R\rightarrow \C[\lambda]$, one can define a 2-cocycle $df$ as follows:
\begin{eqnarray}\label{eq4.1}
&&(df)\, {}_\lambda\,(x,y)=-f([x\, {}_\lambda\,y]), \quad\forall\ x,y \in R.
\end{eqnarray}
Such a 2-cocycle is called a {\it 2-coboundary} on $R$. Denote by
$B^{2}(R,\ \C)$ the vector space of 2-coboundaries on
 $R$. The quotient space
 \begin{eqnarray*}
&&H^{2}(R,\ \C)=C^{2}(R,\ \C)/B^{2}(R,\
\C)
\end{eqnarray*}
is called  the {\it second cohomology group} of $R$. It is  well-known fact that the second cohomology group of $R$
is closely related to  central extensions of
$R$. To be more precise,  there is a one-to-one correspondence between the
set of equivalence classes of one-dimensional central extensions of
$R$ by $\C$ and the second cohomology group of $R$.

We are going to determine the second cohomology group of  $\mathfrak{csv}$.
Now let $\phi^\prime$ be a 2-cocycle of  $\mathfrak{csv}$ (resp. $\mathfrak{\tilde{csv}}$). Define a
linear map $f$ from $\mathfrak{csv}$ (resp. $\mathfrak{\tilde{csv}}$)  to $\mathbb{C}$ as follows:
\begin{eqnarray*}
&&f(L_i)=\frac{1}{2}\frac{d}{d\lambda}\phi^\prime _\lambda(L_i,L_0)|_{\lambda=0},\
f(M_i)=\frac{d}{d\lambda}\phi^\prime_\lambda(M_i,L_0)|_{\lambda=0},\\
\label{eq-f22}&&f(Y_i)=\frac{2}{3}\frac{d}{d\lambda}\phi^\prime_\lambda(Y_i,L_0)|_{\lambda=0}\ ({\rm resp.}\ f(N_i)=\frac{d}{d\lambda}\phi^\prime _\lambda(N_i,L_0)|_{\lambda=0}),
 \quad \forall\ i\in \Z.
\end{eqnarray*}
Set $\phi=\phi^\prime+df$, where $df$ is defined in
\eqref{eq4.1}.
For any $i\in\Z$, we have
\begin{eqnarray*}
&&\frac{d}{d\lambda}\phi_\lambda(L_i,L_0)|_{\lambda=0}\nonumber\\ &=&\frac{d}{d\lambda}\phi^\prime_\lambda(L_i,L_0)|_{\lambda=0}+
\frac{d}{d\lambda}(df)_\lambda(L_i,L_0)|_{\lambda=0}\nonumber\\
&\!\!\!=\!\!\!&\frac{d}{d\lambda}\phi^\prime_\lambda(L_i,L_0)|_{\lambda=0}-\frac{1}{2}
\frac{\partial}{\partial\lambda}\big(\frac{\partial}{\partial\mu}\phi^\prime_\mu([L_i\, {}_\lambda\,L_0],L_0)|_{\mu=0}\big)|_{\lambda=0}\\
&\!\!\!=\!\!\!&\frac{d}{d\lambda}\phi^\prime_\lambda(L_i,L_0)|_{\lambda=0}-\frac{1}{2}
\frac{\partial}{\partial\lambda}\big(\frac{\partial}{\partial\mu}((2\lambda-\mu)\phi^\prime_\mu(L_i,L_0))|_{\mu=0}\big)|_{\lambda=0}\nonumber\\
&\!\!\!=\!\!\!&\frac{d}{d\lambda}\phi^\prime_\lambda(L_i,L_0)|_{\lambda=0}-\frac{1}{2}
\frac{\partial}{\partial\mu}\big(\frac{\partial}{\partial\lambda}((2\lambda-\mu)\phi^\prime_\mu(L_i,L_0))|_{\lambda=0}\big)|_{\mu=0}
=0.\nonumber
\end{eqnarray*}
Similarly,
\begin{equation}\label{dmy4.5}
\frac{d}{d\lambda}\phi_\lambda(M_i,L_0)|_{\lambda=0}=\frac{d}{d\lambda}\phi_\lambda(Y_i,L_0)|_{\lambda=0}=0\ ({\rm resp.}\ \frac{d}{d\lambda}\phi_\lambda(N_i,L_0)|_{\lambda=0}=0), \quad\forall\ i\in\Z.
\end{equation}

 Using the Jacobi identity on $(L_{i},L_{j},L_{k})$, we  obtain
 \begin{eqnarray}\label{lammu44.33}
&&(\lambda-\mu)\phi_{\lambda+\mu}(L_{i+j}, L_{k})=(\lambda+2\mu)\phi_{\lambda}(L_{i}, L_{j+k})-(\mu+2\lambda)\phi_{\mu}(L_{j}, L_{i+k})
\end{eqnarray}for any $i,j,k\in\Z$.
Setting $k=0$ in \eqref{lammu44.33} gives rise to
\begin{equation}\label{0par4.4}
\aligned
0&=\frac{\partial}{\partial\mu}\big((\lambda-\mu)\phi_{\lambda+\mu}(L_{i+j}, L_{0})-(\lambda+2\mu)\phi_{\lambda}(L_{i}, L_{j})+(\mu+2\lambda)\phi_{\mu}(L_{j}, L_{i})\big)|_{\mu=-\lambda}\\
&=-\phi_{0}(L_{i+j}, L_{0})+2\lambda\frac{\partial}{\partial\mu}\phi_{\lambda+\mu}(L_{i+j}, L_{0})|_{\mu=-\lambda}+\\&
~~~~\frac{\partial}{\partial\mu}\big(-(\lambda+2\mu)\phi_{\lambda}(L_{i}, L_{j})+(\mu+2\lambda)\phi_{\mu}(L_{j}, L_{i})\big)|_{\mu=-\lambda}\\
&=-\phi_{0}(L_{i+j}, L_{0})-3\phi_{\lambda}(L_{i}, L_{j})+\lambda\frac{d}{d\lambda}\phi_{\lambda}(L_{i}, L_{j}).
\endaligned
\end{equation}
Setting $\lambda=j=0$ in  \eqref{0par4.4}, one has $\phi_{0}(L_{i}, L_{0})=0$ for all $i\in\Z$.
Then  \eqref{0par4.4} implies $3\phi_{\lambda}(L_{i}, L_{j})=\lambda\frac{d}{d\lambda}\phi_{\lambda}(L_{i}, L_{j})$,
from which we solve
\begin{eqnarray}\label{phla4.4}
\phi_{\lambda}(L_{i}, L_{j})=a_{i,j}\lambda^3
\end{eqnarray} for some $a_{i,j}\in\C$ and all $i,j\in\Z$.
Inserting \eqref{phla4.4} into \eqref{lammu44.33} and comparing the coefficients of $\lambda^4$ and $\mu^4$, one has
$$a_{i+j,k}=a_{i,j+k}=a_{j,i+k},\quad \forall\ i,j,k\in\Z.$$
This shows that the values $a_{i,j}$  depend only on the sum $i+j$. Then
this allows us to write
\begin{equation*}
\phi_{\lambda}(L_{i}, L_{j})=a_{i+j}\lambda^3,
\end{equation*}
where $a$ is a complex function given by $a_{i+j}:=a_{i,j}$ for all $i,j\in\Z$.

In order to determine $\phi_{\lambda}(M_{i},L_{j})$  and $\phi_{\lambda}(Y_{i},L_{j}),$  we apply the Jacobi identity to triples $(M_{i},L_{j},L_{0})$ and $(Y_{i},L_{j},L_{0})$, respectively,  yielding
\begin{eqnarray}\label{lambda44.4}
0&\!\!\!=\!\!\!&\frac{\partial}{\partial\mu}\big(\lambda\phi_{\lambda+\mu}(M_{i+j}, L_{0})-(\lambda+2\mu)\phi_{\lambda}(M_{i}, L_{j})+\lambda\phi_{\mu}(L_{j}, M_{i})\big)|_{\mu=-\lambda},\\
\label{lambda44.5}
0&\!\!\!=\!\!\!&\frac{\partial}{\partial\mu}\big((\lambda-\frac{1}{2}\mu)\phi_{\lambda+\mu}(Y_{i+j}, L_{0})
-(\lambda+2\mu)\phi_{\lambda}(Y_{i}, L_{j})+\nonumber\\&&(\frac{1}{2}\mu+\frac{3}{2}\lambda)\phi_{\mu}(L_{j}, Y_{i})\big)|_{\mu=-\lambda}.
\end{eqnarray}
Combining \eqref{lambda44.4} with \eqref{dmy4.5} one has
\begin{eqnarray*}
0&\!\!\!=\!\!\!&\frac{\partial}{\partial\mu}\big(-(\lambda+2\mu)\phi_{\lambda}(M_{i}, L_{j})+\lambda\phi_{\mu}(L_{j}, M_{i})\big)|_{\mu=-\lambda}\\
&\!\!\!=\!\!\!&-2\phi_{\lambda}(M_{i}, L_{j})+\lambda\frac{d}{d\lambda}\phi_{\lambda}(M_{i},L_{j}),
\end{eqnarray*}
from which we can infer that
\begin{eqnarray}\label{myb4.9}
\phi_{\lambda}(M_{i}, L_{j})=b_{i,j}\lambda^2
\end{eqnarray}
for some $b_{i,j}\in\C$ and all $i,j\in\Z$.
Similarly,    \eqref{lambda44.5} together with \eqref{dmy4.5} leads to
\begin{equation}\label{phiyl4.9}
\aligned
0&=-\frac{1}{2}\phi_{0}(Y_{i+j},L_0)+\frac{3}{2}\lambda\frac{\partial}{\partial\mu}\phi_{\lambda+\mu}(Y_{i+j}, L_{0})|_{\mu=-\lambda}+\\&
~~~\frac{\partial}{\partial\mu}\big(-(\lambda+2\mu)\phi_{\lambda}(Y_{i}, L_{j})+(\frac{1}{2}\mu+\frac{3}{2}\lambda)\phi_{\mu}(L_{j}, Y_{i})\big)|_{\mu=-\lambda}\\
&=-\frac{1}{2}\phi_{0}(Y_{i+j},L_0)-\frac{5}{2}\phi_{\lambda}(Y_{i}, L_{j})+\lambda\frac{d}{d\lambda}\phi_{\lambda}(Y_{i}, L_{j}).
\endaligned
\end{equation}
Setting $\lambda=j=0$  in \eqref{phiyl4.9}, one has $\phi_{0}(Y_{i},L_0)=0$ for all $i\in\Z$.
Then \eqref{phiyl4.9} can be rephrased as $\frac{5}{2}\phi_{\lambda}(Y_{i}, L_{j})=\lambda\frac{d}{d\lambda}\phi_{\lambda}(Y_{i}, L_{j})$.
Note that this differential equation has only trivial  solution in $\C[\lambda]$. Hence,  $$\phi_{\lambda}(Y_{i}, L_{j})=0,\quad \forall\ i,j\in\Z.$$

It follows from applying the Jacobi identity to the triple $(Y_i,Y_j,L_k)$  that
\begin{eqnarray}\label{my4.10}
(\lambda-\mu)\phi_{\lambda+\mu}(M_{i+j}, L_{k})=(\frac{1}{2}\lambda+\frac{3}{2}\mu)\phi_{\lambda}(Y_{i}, Y_{j+k})-(\frac{1}{2}\mu+\frac{3}{2}\lambda)\phi_{\mu}(Y_{j},Y_{i+k})
\end{eqnarray}
for any $i,j,k\in\Z$.
In particular, let $k=0$ in  \eqref{my4.10}, by \eqref{dmy4.5},  \eqref{myb4.9} and the skew-symmetry
we have
\begin{eqnarray*}
0&\!\!\!=\!\!\!&\frac{\partial}{\partial\mu}\big(-(\frac{1}{2}\lambda+\frac{3}{2}\mu)\phi_{\lambda}(Y_{i}, Y_{j})+(\frac{1}{2}\mu+\frac{3}{2}\lambda)\phi_{\mu}(Y_{j}, Y_{i})\big)|_{\mu=-\lambda}\\
&\!\!\!=\!\!\!&-2\phi_{\lambda}(Y_{i}, Y_{j})+\lambda\frac{d}{d\lambda}\phi_{\lambda}(Y_{i}, Y_{j}).
\end{eqnarray*}
Then we can conclude that
\begin{eqnarray}\label{my4.11}
\phi_{\lambda}(Y_{i}, Y_{j})=c_{i,j}\lambda^2
\end{eqnarray} for some $c_{i,j}\in\C$ and all $i,j\in\Z$.
Due to the skew-symmetry,  $c_{i,j}=-c_{j,i}$ for all $i,j\in\Z$.
From this and inserting \eqref{myb4.9} and \eqref{my4.11} into \eqref{my4.10} with the case $k=0$, we get
$c_{i,j}=0$ for all $i,j\in\Z$, which in turn gives $b_{i,j}=0$ by \eqref{my4.10} for any $i$, $j\in\Z$.
That is,  $$\phi_{\lambda}(Y_{i}, Y_{j})=\phi_{\lambda}(M_{i}, L_{j})=0,\quad\forall\ i,j\in\Z.$$

It follows from applying the Jacobi identity to  $(L_{i},M_{j},Y_{k})$,
we have
\begin{equation}\label{pmy4.12}
\mu\phi_{\lambda+\mu}(M_{i+j}, Y_{k})=(\mu+\frac{3}{2}\lambda)\phi_{\mu}(M_{j}, Y_{i+k}).
\end{equation}
Assume  $$\phi_{\lambda}(M_{i},Y_{j})=\sum_{m=0}^{\infty}d_{m}(M_{i},Y_{j})\lambda^{m}$$
for which all but finitely many $d_{m}(M_{i},Y_{j})\in\C$ are nonzero.
Then \eqref{pmy4.12} becomes as
\begin{eqnarray*}
\mu\sum_{m=0}^{\infty}d_{m}(M_{i+j},Y_{k})(\lambda+\mu)^{m}
=(\mu+\frac{3}{2}\lambda)\sum_{m=0}^{\infty}
d_{m}(M_{j},Y_{i+k})\mu^{m}.
\end{eqnarray*}
It is easy to observe that $\phi_{\lambda}(M_{i},Y_{j})=0$. Finally, using  the Jacobi identity to  $(Y_{i},Y_{0},M_{j})$,
one immediately has  $$\phi_{\lambda}(M_{i},M_{j})=0\quad {\rm for\ all}\  i,j\in\Z.$$

From the above discussions and the skew-symmetry we can determine the second cohomology groups of $\mathfrak{csv}$ and $\mathfrak{\tilde{csv}}$.  In fact,   we have obtained the first statement of the following result.
\begin{theo}\label{theo5.1}
{\rm(1)} $H^{2}(\mathfrak{csv}, \C)=\C{\phi},$
 where
$\phi_{\lambda}(L_{i}, L_{j})=a_{i+j}\lambda^3$  all $i,j\in\Z$
$($all other terms vanish$)$ and $a$ is a nonzero complex function.

 {\rm(2)} $H^{2}(\mathfrak{\tilde{csv}}, \C)=\C{\phi}\oplus\C{\varphi}\oplus\C\psi,$
 where $\phi$ is as in (1) and $\varphi,\psi$  are  defined by
 $$\varphi_{\lambda}(L_{i}, N_{j})=-\varphi_{-\lambda}(N_{j}, L_{i})=-e_{i+j}\lambda^2,\quad \psi_\lambda(N_i,N_j)=q_{i+j}\lambda\ {\rm (all\ other\ terms\ vanish)}$$
for  all $i,j\in\Z$, where $e,q$ are two nonzero complex functions.
\end{theo}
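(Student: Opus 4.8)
The plan is to treat part~(1) as essentially settled by the computations preceding the statement, and to devote the argument to part~(2). The crucial structural remark is that the $\lambda$-brackets among $L_i$, $M_i$ and $Y_i$ coincide in $\mathfrak{csv}$ and $\mathfrak{\tilde{csv}}$, so every Jacobi-identity computation carried out above (on the triples $(L_i,L_j,L_k)$, $(M_i,L_j,L_0)$, $(Y_i,L_j,L_0)$, $(Y_i,Y_j,L_k)$, $(L_i,M_j,Y_k)$ and $(Y_i,Y_0,M_j)$) transfers word for word to $\mathfrak{\tilde{csv}}$. Working with the normalized cocycle $\phi=\phi'+df$ already introduced, whose cochain $f$ also satisfies $f(N_i)=\frac{d}{d\lambda}\phi'_\lambda(N_i,L_0)|_{\lambda=0}$ and hence $\frac{d}{d\lambda}\phi_\lambda(N_i,L_0)|_{\lambda=0}=0$ by \eqref{dmy4.5}, this shows that the only surviving component among pairs drawn from $\{L_i,M_i,Y_i\}$ is $\phi_\lambda(L_i,L_j)=a_{i+j}\lambda^3$. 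What is left is to compute $\phi$ on every pair involving $N_i$, and to verify nontriviality (common to both parts).

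First I would determine $\phi_\lambda(N_i,L_j)$ by applying the Jacobi identity to $(N_i,L_j,L_k)$: since $[N_i\,{}_\lambda L_j]=\lambda N_{i+j}$ and $[L_i\,{}_\lambda N_j]=(\partial+\lambda)N_{i+j}$ mirror the $M$-relations, the manipulation that produced \eqref{myb4.9} yields $\phi_\lambda(N_i,L_j)=e_{i,j}\lambda^2$, and comparing the coefficients of $\lambda^3$ and $\lambda\mu^2$ forces $e_{i,j}$ to depend only on $i+j$; skew-symmetry then gives $\varphi_\lambda(L_i,N_j)=-e_{i+j}\lambda^2$. Next, the Jacobi identity on $(L_i,N_j,N_k)$, using $[N_j\,{}_\mu N_k]=0$, reduces to $\mu\,\phi_{\lambda+\mu}(N_{i+j},N_k)=(\lambda+\mu)\phi_\mu(N_j,N_{i+k})$; putting $i=0$ shows $\phi_\mu(N_j,N_k)$ has no constant term and is linear in $\mu$, whence $\phi_\lambda(N_i,N_j)=q_{i+j}\lambda$, the class $\psi$. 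Finally the remaining two components vanish: the triple $(Y_i,Y_j,N_k)$ together with $\phi_\lambda(Y_i,Y_j)=0$ from part~(1) gives $(\lambda-\mu)\phi_{\lambda+\mu}(M_{i+j},N_k)=0$ and hence $\phi_\lambda(M_i,N_j)=0$, while the triple $(L_0,N_i,Y_j)$ produces a functional equation whose $\frac{3}{2}$-coefficient kills even the linear term, giving $\phi_\lambda(N_i,Y_j)=0$; skew-symmetry disposes of the transposed pairs.

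It then remains to show that $\phi$, $\varphi$, $\psi$ are linearly independent in $H^2(\mathfrak{\tilde{csv}},\C)$ and that none is a coboundary; this is a $\lambda$-degree argument. A coboundary satisfies $(df)_\lambda(L_i,L_j)=-f((\partial+2\lambda)L_{i+j})$ and $(df)_\lambda(L_i,N_j)=-f((\partial+\lambda)N_{i+j})$, both of $\lambda$-degree at most one, whereas $\phi$ and $\varphi$ have $\lambda$-degrees three and two; moreover $(df)_\lambda(N_i,N_j)=-f([N_i\,{}_\lambda N_j])=0$ because $[N_i\,{}_\lambda N_j]=0$, whereas $\psi$ is nonzero on $(N_i,N_j)$. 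Since the three classes are supported on mutually disjoint pairs, they are independent and nontrivial, which, with the vanishing of all other components, gives the stated decomposition. I expect the main obstacle to lie not in the degree bookkeeping but in selecting the right auxiliary triples for the vanishing arguments: the delicate asymmetry is that $\phi(N,L)$ and $\phi(N,N)$ survive precisely because no structural relation ties them to $Y$, whereas $\phi(M,N)$ can only be eliminated through the $(Y,Y,N)$ Jacobi identity, which reuses the vanishing $\phi(Y,Y)=0$ established in part~(1).
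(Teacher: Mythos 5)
Your proposal is correct and follows essentially the same route as the paper: part (1) is taken from the preceding computations, and part (2) is settled by normalizing with the coboundary $df$ and running the Jacobi identity on triples involving $N$, yielding $\phi_\lambda(N_i,L_j)=e_{i+j}\lambda^2$, $\phi_\lambda(N_i,N_j)=q_{i+j}\lambda$ and the vanishing of the remaining components. The only departures are minor and both work: you kill $\phi_\lambda(M_i,N_j)$ via the triple $(Y_i,Y_j,N_k)$ where the paper uses $(N_i,N_j,M_k)$ followed by $(L_i,N_j,M_k)$, and you add an explicit $\lambda$-degree argument for the nontriviality and independence of $\phi,\varphi,\psi$, which the paper leaves implicit.
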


\begin{proof}{\rm(2)}\ It suffices to determine $\phi_\lambda(L_i, N_j)$, $\phi_\lambda(N_i,M_j)$, $\phi_\lambda(N_i,Y_j)$ and $\phi_\lambda(N_i,N_j)$.  Applying the Jacobi identity to the triple $(N_{i},L_{j},L_{0})$ and by the similar arguments as for the case $(M_{i},L_{j},L_{0})$ we can deduce that
\begin{equation}\label{nl4.9}
\phi_{\lambda}(N_{i},L_{j})=e_{i,j}\lambda^2
\end{equation}
for some $e_{i,j}\in\C$ and all $i,j\in\Z$.
Applying the Jacobi identity to the triple $(N_{i},L_{j},L_{k})$ and using \eqref{nl4.9}, we have $$e_{i+j,k}=e_{i,j+k}=e_{i+k,j},\quad \forall\ i,j,k\in\Z.$$
So there is a complex function $e$ such that
\begin{equation*}
\phi_{\lambda}(N_{i}, L_{j})=e_{i+j}\lambda^2,\quad\forall\ i,j\in\Z.
\end{equation*}

While from the triple $(N_i,N_j,M_k)$ we see that
$$
\phi_{\lambda}(N_{i}, M_{j+k})=\phi_{\mu}(N_{j}, M_{i+k}),\quad \forall\ i, j, k\in\Z.
$$ Hence, $\phi_\lambda(N_i, M_j)$ is a constant term.

It follows from applying the Jacobi identity to triples $(L_i,N_j,M_k)$, $(L_i,N_j,Y_k)$ and $(L_i,N_j,N_k)$ respectively,   that
$$\phi_\lambda(N_{i},M_{j})= \phi_{\lambda}(N_{i},Y_{j})=0,\  \phi_\lambda(N_i,N_j)=q_{i+j}\lambda,\quad \forall\ i,j\in\Z, $$
where $q$ is a complex function.  This completes the proof of (2).
\end{proof}

\section{Conformal modules of rank one over $\mathfrak{csv}$ and $\mathfrak{\tilde{csv}}$}

A  conformal module $M$  over a Lie conformal algebra $R$ is called a free $R$-module of rank $n$ if $M$ is a {\em free $\C[\partial]$-module of rank $n$}.

First we introduce a class of  nontrivial free conformal modules of rank one over $\mathcal{CHV}$:  Given $a,b,e\in\C$ and $c\in\C^*,$
let $M_{a,b,c,e}=\C[\partial]v$ and
define \begin{eqnarray*}
L_i\,{}_\lambda\, v=c^i(\partial+a\lambda+b)v,\  M_i\,{}_\lambda\, v=c^iev\quad \mbox{for any} \ i\in\Z.
\end{eqnarray*}
It follows from \cite[Theorem 5.3]{FSW} that we have the following lemma.
\begin{lemm}\label{lemma5.1}
Let $M$ be a   nontrivial free conformal module of rank one over $\mathcal{CHV}.$
Then $M$ is isomorphic to $M_{a,b,c,e}$.
\end{lemm}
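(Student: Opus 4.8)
The plan is to reduce the statement to the classification already available for the loop Heisenberg--Virasoro Lie conformal algebra. Writing $M=\C[\partial]v$, I would first record the two families of actions as
\[
L_i\,{}_\lambda\,v=f_i(\partial,\lambda)\,v,\qquad M_i\,{}_\lambda\,v=g_i(\partial,\lambda)\,v,\qquad i\in\Z,
\]
for polynomials $f_i,g_i\in\C[\partial,\lambda]$. Unwinding the definition of $M_{a,b,c,e}$, the lemma is exactly the assertion that for a nontrivial module the only possibility is $f_i(\partial,\lambda)=c^i(\partial+a\lambda+b)$ and $g_i(\partial,\lambda)=c^ie$ for some $a,b,e\in\C$ and $c\in\C^*$. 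Since the introduction identifies $\mathcal{CHV}=\bigoplus_{i\in\Z}(\C[\partial]L_i\oplus\C[\partial]M_i)$ with the loop Heisenberg--Virasoro Lie conformal algebra of \cite{FSW}, the quickest route is to invoke \cite[Theorem 5.3]{FSW} and then merely match its module parameters with our $(a,b,c,e)$; the only genuine task in this route is the bookkeeping check that the $\lambda$-brackets of \eqref{pro3.31} restricted to the $L$'s and $M$'s agree with the defining relations used there.

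If one prefers a self-contained argument, I would instead derive functional equations from the module axiom $a\,{}_\lambda\,(b\,{}_\mu\,v)-b\,{}_\mu\,(a\,{}_\lambda\,v)=[a\,{}_\lambda\,b]\,{}_{\lambda+\mu}\,v$ applied to the generating brackets, using conformal sesquilinearity (which shifts $\partial\mapsto\partial+\lambda$ inside $f_j,g_j$ when $L_i$ or $M_i$ acts). The pair $(L_i,L_j)$ yields
\[
f_j(\partial+\lambda,\mu)f_i(\partial,\lambda)-f_i(\partial+\mu,\lambda)f_j(\partial,\mu)=(\lambda-\mu)f_{i+j}(\partial,\lambda+\mu),
\]
which is precisely the equation governing nontrivial free rank-one modules over the loop Virasoro subalgebra $\mathcal{CV}$; its solutions, known from \cite{WCY}, force $f_i(\partial,\lambda)=c^i(\partial+a\lambda+b)$ with $c\in\C^*$. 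Substituting this into the relations coming from $(L_i,M_j)$ and $(M_i,L_j)$, namely $[L_i\,{}_\lambda\,M_j]=(\partial+\lambda)M_{i+j}$ and $[M_i\,{}_\lambda\,L_j]=\lambda M_{i+j}$, then pins down the $g_i$ up to the geometric factor $c^i$ and a constant, while the vanishing bracket $[M_i\,{}_\lambda\,M_j]=0$ only confirms consistency and leaves $g_i(\partial,\lambda)=c^ie$.

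The step I expect to be the main obstacle is solving the nonlinear $(L_i,L_j)$ equation and excluding degenerate solutions: this is where the hypothesis that $M$ is \emph{nontrivial} is essential (it rules out the all-zero action, and, together with the functional equation, forces $c\neq0$), and where the real work is carried out by the cited classifications in \cite{WCY} and \cite{FSW}. By contrast, once the $f_i$ are fixed, determining the $g_i$ amounts to a routine comparison of the coefficients of powers of $\lambda$ and $\mu$ in the two remaining functional equations, so those steps should present no difficulty.
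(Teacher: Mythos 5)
Your first route is exactly what the paper does: Lemma \ref{lemma5.1} is stated with no proof beyond the citation of \cite[Theorem 5.3]{FSW}, after identifying $\mathcal{CHV}$ with the loop Heisenberg--Virasoro Lie conformal algebra. Your alternative self-contained sketch (deriving the functional equations and appealing to \cite{WCY} for the $(L_i,L_j)$ equation) is also sound, but the primary argument coincides with the paper's.
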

 Extend the ${\mathcal{CHV}}$-action on $M_{a,b,c,0}$ to  $\mathfrak{csv}$  by defining
 \begin{eqnarray*}
 Y_i\,{}_\lambda\, v=0,\quad\forall\ i\in\Z.
 \end{eqnarray*}In this case we denote $M_{a,b,c,0}$  by $M_{a,b,c,0,0}$. And  $M_{a,b,c,0}$ is denoted by $M_{a,b,c,0,0,d}$ if we furthermore require
\begin{equation*}
N_i\,{}_\lambda\, v=dc^i  v\quad{\rm for\ all}\ i\in\Z\ \mbox{and some}\  c\in\C^*,d\in\C.
\end{equation*}

All nontrivial free  conformal modules of rank one  over the loop Schr\"{o}dinger-Virasoro Lie conformal algebra $\mathfrak{csv}$ and  the extended loop Schr\"{o}dinger-Virasoro Lie conformal algebra
$\mathfrak{\tilde{csv}}$ can be determined, as stated in the following result.

\begin{theo}\label{theo6.1}
{\rm(1)} Suppose that $M$ is a  nontrivial free  conformal module of rank one over $\mathfrak{csv}$.
Then $M$ is isomorphic to $M_{a,b,c,0,0}$ for some  $a,b\in\C$ and $c\in\C^*$.

{\rm(2)} Suppose that $M$ is a  nontrivial free  conformal module of rank one over $\mathfrak{\tilde{csv}}$.
Then $M$ is isomorphic to $M_{a,b,c,0,0,d}$ for some  $a,b,d\in\C$ and $c\in\C^*$.
\end{theo}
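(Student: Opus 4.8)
The plan is to exploit that a rank-one free module is $M=\C[\partial]v$, so every generator acts on $v$ by a polynomial: $L_i\,{}_\lambda\, v=\ell_i(\partial,\lambda)v$, $M_i\,{}_\lambda\, v=m_i(\partial,\lambda)v$, $Y_i\,{}_\lambda\, v=h_i(\partial,\lambda)v$, and (in the extended case) $N_i\,{}_\lambda\, v=k_i(\partial,\lambda)v$, with $\ell_i,m_i,h_i,k_i\in\C[\partial,\lambda]$. First I would restrict the module to the subalgebra $\mathcal{CHV}=\bigoplus_i(\C[\partial]L_i\oplus\C[\partial]M_i)$. This restriction is again free of rank one, and it cannot be trivial: if $L_0\,{}_\lambda\, v=0$ then the module identity applied to $[L_0\,{}_\lambda\, Y_i]=(\partial+\frac32\lambda)Y_i$ immediately forces $h_i=0$, and likewise all actions vanish, contradicting nontriviality. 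Hence Lemma \ref{lemma5.1} applies and gives, up to isomorphism, $L_i\,{}_\lambda\, v=c^i(\partial+a\lambda+b)v$ and $M_i\,{}_\lambda\, v=c^ie\,v$ for some $a,b,e\in\C$, $c\in\C^*$.

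With $L_i,M_i$ pinned down, the core of part (1) is to show $Y_i$ acts by zero and $e=0$. I would feed $[L_0\,{}_\lambda\, Y_i]=(\partial+\frac32\lambda)Y_i$ into the module identity $L_0\,{}_\lambda\,(Y_i\,{}_\mu\, v)-Y_i\,{}_\mu\,(L_0\,{}_\lambda\, v)=[L_0\,{}_\lambda\, Y_i]\,{}_{\lambda+\mu}\, v$, obtaining the polynomial identity
\[
h_i(\partial+\lambda,\mu)(\partial+a\lambda+b)-(\partial+\mu+a\lambda+b)h_i(\partial,\mu)=(\tfrac12\lambda-\mu)h_i(\partial,\lambda+\mu).
\]
Setting $\mu=0$ gives $(\partial+a\lambda+b)\big(h_i(\partial+\lambda,0)-h_i(\partial,0)\big)=\tfrac12\lambda\,h_i(\partial,\lambda)$; differentiating in $\lambda$ and evaluating at $\lambda=0$ yields the first-order ODE $2(\partial+b)\frac{d}{d\partial}h_i(\partial,0)=h_i(\partial,0)$. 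A leading-degree comparison shows this has no nonzero polynomial solution, so $h_i(\partial,0)=0$ and then $h_i\equiv0$, i.e.\ $Y_i\,{}_\lambda\, v=0$. Substituting this into the module identity coming from $[Y_i\,{}_\lambda\, Y_j]=(\partial+2\lambda)M_{i+j}$ gives $0=(\lambda-\mu)c^{i+j}e\,v$, forcing $e=0$; this is exactly $M\cong M_{a,b,c,0,0}$.

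For part (2) the $L,M,Y$ relations are unchanged, so the above still yields $Y_i\,{}_\lambda\, v=0$ and $e=0$; it remains to determine $k_i$. Applying the module identity to $[L_0\,{}_\lambda\, N_i]=(\partial+\lambda)N_i$ produces
\[
k_i(\partial+\lambda,\mu)(\partial+a\lambda+b)-(\partial+\mu+a\lambda+b)k_i(\partial,\mu)=-\mu\,k_i(\partial,\lambda+\mu).
\]
Here I would run a two-step degree analysis: the top power of $\partial$ cancels, and comparing the coefficient of the highest surviving power $\partial^{D}$ (with $D=\deg_\partial k_i$) gives a functional equation $\kappa(\mu)(D\lambda-\mu)=-\mu\,\kappa(\lambda+\mu)$ for the leading coefficient $\kappa(\mu)$ of $k_i$, whose only polynomial solution forces $D=0$, so $k_i=k_i(\lambda)$; feeding this back shows $k_i(\lambda)=k_i(\lambda+\mu)$, whence $k_i=d_i$ is a constant. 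Finally, applying the identity to $[L_i\,{}_\lambda\, N_0]=(\partial+\lambda)N_i$ relates the constants by $d_i=c^i d_0$; writing $d=d_0$ gives $N_i\,{}_\lambda\, v=dc^i v$, i.e.\ $M\cong M_{a,b,c,0,0,d}$.

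The routine part is checking that $M_{a,b,c,0,0}$ and $M_{a,b,c,0,0,d}$ genuinely satisfy all the module axioms, so that the classification is exact rather than merely a list of necessary conditions. The main obstacle will be the extraction of $k_i$ in part (2): unlike the $Y$-case there is no ODE shortcut, and one must rule out any $\partial$-dependence through the leading-coefficient functional equation above, excluding $D\ge1$. A secondary point requiring care is the nontriviality bookkeeping needed to legitimately invoke Lemma \ref{lemma5.1}.
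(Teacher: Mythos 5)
Your proof is correct, and while it shares the paper's overall skeleton (restrict to $\mathcal{CHV}=\bigoplus_i(\C[\partial]L_i\oplus\C[\partial]M_i)$, invoke Lemma \ref{lemma5.1}, then kill the $Y$-action and $e$, then pin down $N$), the mechanics differ at both remaining steps. For the $Y$-action the paper first uses $[M_i\,{}_\lambda\,Y_j]=0$ to obtain the dichotomy ``$e=0$ or $h_j$ is independent of $\partial$,'' then feeds this into $[Y_i\,{}_\lambda\,Y_j]=(\partial+2\lambda)M_{i+j}$ to force $e=0$ and the $\partial$-independence of $h_j$, and only then kills $h_j$ via $[L_i\,{}_\lambda\,Y_j]$ at $\mu=0$; you instead extract the ODE $2(\partial+b)\frac{d}{d\partial}h_i(\partial,0)=h_i(\partial,0)$ from $[L_0\,{}_\lambda\,Y_i]$ alone, conclude $h_i\equiv0$ directly, and only afterwards get $e=0$ from $[Y_i\,{}_\lambda\,Y_j]$ --- this reverses the order, needs fewer brackets, and sidesteps the paper's somewhat delicate case analysis. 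For part (2) the paper is slicker: it observes that $\bigoplus_i(\C[\partial]L_i\oplus\C[\partial]N_i)\cong\mathcal{CHV}$ as Lie conformal algebras and reuses Lemma \ref{lemma5.1} to get $N_i\,{}_\lambda\,v=dc^iv$ in one line, whereas you redo the work by hand through the leading-coefficient functional equation $\kappa(\mu)(D\lambda-\mu)=-\mu\kappa(\lambda+\mu)$ (which does force $D=0$, e.g.\ by setting $\mu=-\lambda$ and then $\mu=0$); your route is longer but self-contained and verifiable without the isomorphism observation. You also make explicit the nontriviality check needed before invoking Lemma \ref{lemma5.1}, which the paper leaves implicit. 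Both arguments are sound.
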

\begin{proof}
{\rm(1)}  By Lemma \ref{lemma5.1}, we can assume that
\begin{eqnarray*}
L_i\,{}_\lambda\, v=c^i(\partial+a\lambda+b)v,\  M_i\,{}_\lambda\, v=c^iev,\  Y_i\,{}_\lambda\, v=
h_i(\partial,\lambda)v
\end{eqnarray*}
 for some $h_i(\partial,\lambda)\in \C[\partial,\lambda]$, $a,b,e\in\C,$ $c\in\C^*$ and all $i\in\Z.$
From $[M_i\, {}_\lambda \, Y_j]=0$ one has
\begin{equation*}
h_j(\partial+\lambda,\mu)e=h_j(\partial,\mu)e,\quad \forall\ j\in\Z,
\end{equation*}
 which forces either $e=0$ or
 \begin{equation}\label{eq-h_jjj}
 h_{j}(\mu):=h_{j}(\partial,\mu)\in\C[\mu],\quad \forall\ j\in\Z.
 \end{equation}
From  $[Y_i\, {}_\lambda \, Y_j]=(\partial+2\lambda)M_{i+j}$ one has
\begin{equation*}
\label{g7.6}(\lambda-\mu)ec^{i+j}=h_{j}(\partial+\lambda,\mu)h_{i}(\partial,\lambda)-h_{i}(\partial+\mu,\lambda)h_{j}(\partial,\mu),\quad \forall\ i,j\in\Z.
\end{equation*}
This and \eqref{eq-h_jjj} imply $e=0$, which in turn implies the validity of \eqref{eq-h_jjj}.
 From $[L_i\, {}_\lambda \, Y_j]=(\partial+\frac{3}{2}\lambda) Y_{i+j}$ one has
$(\frac{1}{2}\lambda-\mu)h_{i+j}(\lambda+\mu)=
 -c^i\mu h_{j}(\mu),
$
which implies $ h_{i}(\lambda)=0$ for any $i\in\Z.$
Hence, we get
$
M_i\,{}_\lambda\, v=Y_i\,{}_\lambda\, v=0$ for any  $i\in\Z$.
 This completes the proof of (1).

Finally, note that $$\bigoplus_{i\in\Z}(\C[\partial]L_i\oplus\C[\partial]M_i)\cong \bigoplus_{i\in\Z}(\C[\partial]L_i\oplus\C[\partial]N_i)$$ as Lie conformal algebras, then by Lemma \ref{lemma5.1} again, there exists $d\in\C$ such that
\begin{equation*}
N_i\,{}_\lambda\, v=c^idv,\quad {\rm \forall}\  i\in\Z,
\end{equation*} completing the proof of (2).\end{proof}

\section{$\Z$-graded free intermediate series modules over $\mathfrak{csv}$ and $\mathfrak{\tilde{csv}}$}
The aim of this section is to classify $\Z$-graded free intermediate series modules over  $\mathfrak{csv}$ and $\mathfrak{\tilde{csv}}$, respectively.

Let  $R$ be a {\it $\Z$-graded} Lie  conformal algebra.
A conformal module $V$ over $R$ is  {\it $\Z$-graded} if $V=\oplus_{i\in\Z}V_i$, where each $V_i$ is a $\C[\partial]$-submodule and $R_i\,{}_\lambda\, V_j\subset V_{i+j}[\lambda]$ for any $i,j\in \Z$; if in addition, each $V_i$ is freely generated by one element $v_i\in V_i$ over $\C[\partial]$, then $V$ is called  a {\it $\Z$-graded free intermediate series module}  (see \cite{CK,GXY}).

First we introduce two classes of $\Z$-graded free intermediate series modules over $\mathcal{CHV}$:
Given $a,b,c\in\C$, let $V_{a,b,c}=\oplus_{i\in\Z}\C[\partial]v_i$ and
define
\begin{eqnarray*}
L_i\,{}_\lambda\, v_m\!\!\!&=&\!\!\!(\partial+a\lambda+b)v_{i+m},\\ \ M_i\,{}_\lambda\,  v_m\!\!\!&=&\!\!\!cv_{i+m}\nonumber\quad \mbox{for any} \ i,m\in\Z.
\end{eqnarray*}
Denote $\{0,1\}^\infty$ to be the set of sequences
$(a_i)_{i\in\Z}$  with $a_i\in\{0,1\}$ for any $i\in\Z$; For $A\in\{0,1\}^\infty$ and $b,c\in\C$,
 another class of  nontrivial $\Z$-graded free intermediate series module  is defined on $V_{A,b,c}=\oplus_{i\in\Z}\C[\partial]v_i$, whose
$\lambda$-actions are given by
\begin{eqnarray*}
L_i\,{}_{{}_\lambda} v_m&\!\!\!=\!\!\!&
\begin{cases}
(\partial+b)v_{i+m} &\ \mbox{if}\  (a_m,a_{i+m})=(0,0),\\[4pt]
(\partial+b+\lambda)v_{i+m}&\  \mbox{if} \ (a_m,a_{i+m})=(1,1),\\[4pt]
v_{i+m} &\ \mbox{if} \ (a_m,a_{i+m})=(0,1),\\[4pt]
(\partial+b)(\partial+b+\lambda)v_{i+m}&\  \mbox{if} \ (a_m,a_{i+m})=(1,0)
\end{cases}\\
\label{m7.4}M_i\,{}_\lambda\, v_m&\!\!\!=\!\!\!&cv_{i+m}\nonumber\quad \mbox{for any} \ i,m\in\Z.
\end{eqnarray*}

The following lemma follows from \cite[Theorem 6.6]{FSW} and \cite[Theorem 5.10]{WCY}.
\begin{lemm}\label{lemma-V}
Let $V$ be a nontrivial $\Z$-graded free intermediate series module over $\mathcal{CHV}$.
Then $V$ is isomorphic to $V_{a,b,c}$  or $V_{A,b,c}$.
\end{lemm}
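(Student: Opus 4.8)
The plan is to exploit the inclusion $\mathcal{CV}=\bigoplus_{i\in\Z}\C[\partial]L_i\subset\mathcal{CHV}$ of the loop Virasoro Lie conformal algebra, together with the fact that $\bigoplus_{i\in\Z}\C[\partial]M_i$ is an abelian ideal of $\mathcal{CHV}$ on which $\mathcal{CV}$ acts. First I would record the $\lambda$-actions on the homogeneous generators in the generic form
\begin{equation*}
L_i\,{}_\lambda\,v_m=\phi_{i,m}(\partial,\lambda)v_{i+m},\qquad M_i\,{}_\lambda\,v_m=\psi_{i,m}(\partial,\lambda)v_{i+m}
\end{equation*}
for some $\phi_{i,m},\psi_{i,m}\in\C[\partial,\lambda]$ and all $i,m\in\Z$, using that $V=\bigoplus_{i\in\Z}\C[\partial]v_i$ is $\Z$-graded with each $V_i=\C[\partial]v_i$ free of rank one. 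Restricting the action to $\mathcal{CV}$ makes $V$ a $\Z$-graded free intermediate series module over the loop Virasoro Lie conformal algebra, which is the object controlled by the first cited classification.

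At this point I would invoke \cite[Theorem 5.10]{WCY} to pin down the family $\{\phi_{i,m}\}$ up to isomorphism: the $L$-action is either the uniform form $\phi_{i,m}(\partial,\lambda)=\partial+a\lambda+b$ for fixed $a,b\in\C$, or the exceptional form governed by a sequence $A=(a_m)_{m\in\Z}\in\{0,1\}^\infty$, in which $\phi_{i,m}$ depends only on the pair $(a_m,a_{i+m})$ exactly as in the definition of $V_{A,b,c}$. This is the step I would lean on a cited result for most heavily, since reproving the non-uniform branch from scratch is precisely the delicate part of the loop Virasoro analysis.

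Next I would determine the $M$-action. The module axioms applied to the mixed brackets $[L_i\,{}_\lambda\,M_j]=(\partial+\lambda)M_{i+j}$ and $[M_i\,{}_\lambda\,L_j]=\lambda M_{i+j}$, together with $[M_i\,{}_\lambda\,M_j]=0$, produce functional equations relating each $\psi_{i,m}$ to the already-determined $\phi_{i,m}$. Comparing highest $\lambda$-degrees and using $\Z$-homogeneity should force every $\psi_{i,m}$ to collapse to a single constant $c$, independent of $i$ and $m$; this scalar behaviour of the Heisenberg part is what \cite[Theorem 6.6]{FSW} supplies when one classifies free intermediate series modules over $\mathcal{CHV}$ itself. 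Assembling the uniform (resp. exceptional) $L$-action with the constant $M$-action then yields $V\cong V_{a,b,c}$ (resp. $V\cong V_{A,b,c}$).

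The main obstacle I anticipate lies in the exceptional branch: one must verify that a constant $M$-action is genuinely consistent with the non-uniform, pairwise $(a_m,a_{i+m})$-dependent $L$-action, and that no residual isomorphism collapses the two families or hides an additional invariant. In practice this reduces to a finite, case-by-case check over the four patterns $(a_m,a_{i+m})\in\{0,1\}^2$, which the combination of the two cited theorems already organizes. The conceptual content is therefore the reduction of $\mathcal{CHV}$-modules to its loop Virasoro subalgebra and abelian ideal, not the ensuing computation.
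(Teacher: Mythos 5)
Your proposal is correct and takes essentially the same route as the paper, which offers no argument beyond the citation of \cite[Theorem 6.6]{FSW} and \cite[Theorem 5.10]{WCY} --- exactly the two results you invoke, the latter for the two possible shapes of the $L$-action and the former for the constancy of the $M$-action on an intermediate series module over $\mathcal{CHV}$. Your extra discussion of the mixed-bracket functional equations only elaborates on what those citations already supply.
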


 Extend the $\mathcal{CHV}$-action on $V_{a,b,0}$  and $V_{A,b,0}$ to  $\mathfrak{csv}$  by defining
 \begin{eqnarray*}
 Y_i\,{}_\lambda\, v_m=0,\quad\forall\ i,m\in\Z.
 \end{eqnarray*}In this case we denote $V_{a,b,0}$ and  $V_{A,b,0}$ by $V_{a,b,0,0}$ and $V_{A,b,0,0},$ respectively.  If we furthermore require
 \begin{equation*}
  N_i\,{}_\lambda\, v_m=dv_{i+m}\quad   {\rm for\ all}\ i,m\in\Z\ {\rm and\ some}\   d\in\C,
 \end{equation*}$V_{a,b,0}$ and  $V_{A,b,0}$ are denoted by $V_{a,b,0,0,d}$ and $V_{A,b,0,0,d},$ respectively.

 \begin{theo}\label{theo7.6}
{\rm(1)} Suppose that $V$ is a nontrivial $\Z$-graded free intermediate series module over $\mathfrak{csv}$.
Then $V$ is isomorphic to either $V_{a,b,0,0}$ or $V_{A,b,0,0}$ for some $A\in\{0,1\}^\infty$ and $a,b\in\C$.

{\rm(2)} Suppose that $V$ is a nontrivial $\Z$-graded free intermediate series module over $\mathfrak{\tilde{csv}}$.
Then $V$ is isomorphic to either $V_{a,b,0,0,d}$  or
 $V_{A,b,0,0,d}$ for some $A\in\{0,1\}^\infty$ and $a,b, d\in\C$.
\end{theo}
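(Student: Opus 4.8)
The plan is to restrict to the subalgebra $\mathcal{CHV}=\bigoplus_{i\in\Z}(\C[\partial]L_i\oplus\C[\partial]M_i)$ and then pin down the remaining $\lambda$-actions. Since $V=\oplus_{i\in\Z}\C[\partial]v_i$ is a free intermediate series module and $\mathcal{CHV}\subset\mathfrak{csv}$, Lemma \ref{lemma-V} tells me that, up to isomorphism, the $L$- and $M$-actions are those of $V_{a,b,c}$ or of $V_{A,b,c}$; in either case $M_i\,{}_\lambda\,v_m=cv_{i+m}$ (a short check shows a trivial $L$-action would force the whole module to be trivial, so the restriction is again nontrivial and the lemma applies). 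Writing $Y_i\,{}_\lambda\,v_m=g_{i,m}(\partial,\lambda)v_{i+m}$ for suitable $g_{i,m}(\partial,\lambda)\in\C[\partial,\lambda]$, the goal of part (1) is to prove $c=0$ and $g_{i,m}=0$ for all $i,m$, so that $V\cong V_{a,b,0,0}$ or $V_{A,b,0,0}$.

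First I would show $c=0$, uniformly in both cases. Feeding the module axiom with the (vanishing) bracket $[M_i\,{}_\lambda\,Y_j]=0$ from \eqref{pro3.31}--\eqref{pro3.34} and using $M_i\,{}_\lambda\,v_m=cv_{i+m}$ gives $c\big(g_{j,m}(\partial+\lambda,\mu)-g_{j,i+m}(\partial,\mu)\big)=0$ for all $i$; hence if $c\neq0$ then $g_{j,m}$ is independent of both $\partial$ and $m$, say $g_{j,m}(\partial,\mu)=g_j(\mu)$. But then the bracket $[Y_i\,{}_\lambda\,Y_j]=(\partial+2\lambda)M_{i+j}$ has left-hand side $g_i(\lambda)g_j(\mu)-g_j(\mu)g_i(\lambda)=0$ by skew-symmetry, while its right-hand side evaluates to $(\lambda-\mu)c\,v_{i+j+m}$; comparing forces $c=0$, a contradiction. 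Thus $c=0$ and $M_i\,{}_\lambda\,v_m=0$.

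With $c=0$ I would eliminate the $Y$-action through $[L_i\,{}_\lambda\,Y_j]=(\partial+\tfrac32\lambda)Y_{i+j}$. Substituting into the module axiom yields a functional equation relating $g_{j,m}$, $g_{j,i+m}$ and $g_{i+j,m}$, in which the $L$-action coefficient appears. For $V_{a,b,c}$ this coefficient is $\partial+a\lambda+b$; specializing $\lambda$ and $\mu$ to $0$ first shows $g_{i,m}$ is independent of $i$ and $m$, a comparison of top-degree coefficients in $\partial$ then forces it independent of $\partial$, and a final $\mu=0$ collapses the relation to $\tfrac12\lambda\,g(\lambda)=0$, whence $g_{i,m}=0$. \textbf{The main obstacle is the module $V_{A,b,c}$}, where $L_i\,{}_\lambda\,v_m$ takes four different forms according to $(a_m,a_{i+m})\in\{0,1\}^2$ (notably the degree-two coefficient $(\partial+b)(\partial+b+\lambda)$ when $(a_m,a_{i+m})=(1,0)$). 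Here the leading-degree bookkeeping is no longer uniform, and I expect to split into subcases according to the pattern $(a_m,a_{i+m},a_{j+m},a_{i+j+m})$, running the same degree-in-$\partial$ comparison followed by the $\mu=0$ specialization in each to obtain $g_{i,m}=0$ once more. This completes part (1).

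For part (2) I would first restrict a module over $\mathfrak{\tilde{csv}}$ to $\mathfrak{csv}$; by part (1) this forces $M_i\,{}_\lambda\,v_m=Y_i\,{}_\lambda\,v_m=0$ and the $L$-action to be that of $V_{a,b,0,0}$ or $V_{A,b,0,0}$. It remains to determine $N_i\,{}_\lambda\,v_m$. Since $\bigoplus_{i\in\Z}(\C[\partial]L_i\oplus\C[\partial]N_i)\cong\mathcal{CHV}$ as Lie conformal algebras (compare \eqref{def1.11} with \eqref{pro3.31}), Lemma \ref{lemma-V} applied to this copy, together with the already-fixed $L$-action, yields $N_i\,{}_\lambda\,v_m=d\,v_{i+m}$ for a single $d\in\C$. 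Finally, the remaining mixed brackets $[N_i\,{}_\lambda\,M_j]=2M_{i+j}$, $[M_i\,{}_\lambda\,N_j]=-2M_{i+j}$, $[N_i\,{}_\lambda\,Y_j]=Y_{i+j}$ and $[Y_i\,{}_\lambda\,N_j]=-Y_{i+j}$ from \eqref{def1.11}--\eqref{def1.14} are automatically satisfied because $M$ and $Y$ act trivially, so $V\cong V_{a,b,0,0,d}$ or $V_{A,b,0,0,d}$.
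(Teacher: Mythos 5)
Your skeleton coincides with the paper's (restrict to $\mathcal{CHV}$, invoke Lemma \ref{lemma-V}, kill $c$ via $[M_i\,{}_\lambda\,Y_j]=0$ and $[Y_i\,{}_\lambda\,Y_j]=(\partial+2\lambda)M_{i+j}$, then attack the $Y$-action through $[L_i\,{}_\lambda\,Y_j]$; part (2) is handled identically). The $c=0$ step and part (2) are fine. The genuine gap is exactly the step you flag as ``the main obstacle'': the $Y$-action is never actually shown to vanish. For $V_{A,b,c}$ you only announce a case split over the patterns $(a_m,a_{i+m},a_{j+m},a_{i+j+m})$ without executing it, and this is the substantive content of part (1). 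Worse, the reduction you sketch even for the uniform case $V_{a,b,c}$ does not work as stated: setting $\lambda=\mu=0$ in the functional equation only yields $(\partial+b)\bigl(g_{j,m}(\partial,0)-g_{j,i+m}(\partial,0)\bigr)=0$, i.e.\ information about the specialization of $g$ at $\mu=0$, not that $g_{i,m}(\partial,\mu)$ is independent of $i$ and $m$ as a polynomial; so the subsequent top-degree comparison is unjustified. Note also that once $c=0$ the relation from $[M_i\,{}_\lambda\,Y_j]=0$ gives you nothing, so at that point you have no control on the $\partial$-dependence of $g_{j,m}$ and would need to recover it from somewhere else.

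The paper closes this with one extra observation that makes all case analysis unnecessary. With $c=0$, the relation coming from $[Y_0\,{}_\lambda\,Y_0]=(\partial+2\lambda)M_0$ becomes $h_{0,m}(\partial+\lambda,\mu)h_{0,m}(\partial,\lambda)=h_{0,m}(\partial+\mu,\lambda)h_{0,m}(\partial,\mu)$, and comparing the subleading coefficient in $\partial$ forces $h_{0,m}$ to be independent of $\partial$. Then applying $[L_i\,{}_\lambda\,Y_0]=(\partial+\tfrac{3}{2}\lambda)Y_i$ to $v_m$ gives
\begin{equation*}
(\tfrac{1}{2}\lambda-\mu)\,h_{i,m}(\partial,\lambda+\mu)=h_{0,m}(\mu)f_{i,m}(\partial,\lambda)-h_{0,i+m}(\mu)f_{i,m}(\partial+\mu,\lambda),
\end{equation*}
and the two terms on the right cancel identically at $\mu=i=0$ \emph{whatever} $f_{0,m}$ is, yielding $h_{0,m}=0$; setting $\mu=0$ for general $i$ then gives $\tfrac{1}{2}\lambda h_{i,m}(\partial,\lambda)=0$. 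The four possible shapes of the $L$-action never enter. To repair your proof, either carry out your subcases in full or adopt this $j=0$, $\mu=0$ specialization, which settles the matter in two lines.
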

\begin{proof}
 By Lemma \ref{lemma-V}, assume that
\begin{eqnarray*}&&L_i\,{}_\lambda\, v_m=f_{i,m}(\partial,\lambda)v_{i+m},\ M_i\,{}_\lambda\, v_m=cv_{i+m},\ Y_i\,{}_\lambda\, v_m=h_{i,m}(\partial,\lambda)v_{i+m}\  \end{eqnarray*}for  some  $f_{i,m}(\partial,\lambda), h_{i,m}(\partial,\lambda)\in\C[\partial,\lambda]$ and $c\in\C.$
It follows from $[M_i\,{}_\lambda\, Y_j]=0$ that \begin{equation*}\label{f149}c\cdot h_{j,m}(\partial+\lambda,\mu)=c\cdot h_{j,i+m}(\partial,\mu),\quad \forall\ i,j,m\in\Z,\end{equation*} which forces either $c=0$ or  \begin{equation}\label{eq-h_j}h_{j,m}(\mu):=h_{j,m}(\partial,\mu)\in\C[\mu],\quad \forall\ j,m\in\Z.\end{equation} From  $[Y_i\, {}_\lambda \, Y_j]=(\partial+2\lambda)M_{i+j}$ one has
\begin{equation*}
(\lambda-\mu)c=h_{j,m}(\partial+\lambda,\mu)h_{i,j+m}(\partial,\lambda)-h_{i,m}(\partial+\mu,\lambda)h_{j,i+m}(\partial,\mu),\quad \forall\ i,j,m\in\Z.
\end{equation*}
This and \eqref{eq-h_j} imply $c=0$, which in turn forces the validity of \eqref{eq-h_j} for $j=0$.
Then it follows from this and applying both sides of $[L_i\, {}_\lambda \, Y_0]=(\partial+\frac{3}{2}\lambda) Y_{i}$ on $v_m$ that
\begin{equation}\label{eq-hf}
(\frac{1}{2}\lambda-\mu) h_{i,m}(\partial,\lambda+\mu)\!=\!h_{0,m}(\mu)f_{i,m}(\partial,\lambda)-h_{0,i+m}(\mu)f_{i,m}(\partial+\mu,\lambda).
\end{equation}
Setting $\mu=i=0$ in \eqref{eq-hf}, we get $h_{0,m}(\lambda)=0$ for any $m\in\Z.$ Then setting $\mu=0$ in \eqref{eq-hf} again, we have $h_{i,m}(\partial,\lambda)=0$ for any $i,m\in\Z.$
This completes the proof of (1).

By the similar reason as for Theorem \ref{theo6.1} (2), there exists some $d\in\C$ such that
\begin{equation*}
N_i\,{}_\lambda\, v_m=dv_{i+m},\quad \forall \ i\in\Z,
\end{equation*}
hence completing the proof (2).
\end{proof}

\small

\end{document}